\newtheorem{thm}{Theorem}[section]
\newtheorem{prop}[thm]{Proposition}
\newtheorem{lemma}[thm]{Lemma}
\newtheorem{coro}[thm]{Corollary}
\theoremstyle{definition}
\newtheorem{paragr}[thm]{}
\newtheorem{remark}[thm]{Remark}
\def\xpoint{\futurelet\@let@token\@xpoint}
\def\@xpoint{%
	\ifx\@let@token.\else
	.%
	\fi
	\@\xspace}
\newcommand{\loccit}{loc.~cit\xpoint}
\newcommand*{\ie}{i.e\xpoint}
\newcommand\forlang\emph
\newcommand\ndef\emph
\newcommand\nbd\nobreakdash
\renewcommand{\phi}{\varphi}
\renewcommand{\leq}{\leqslant}
\renewcommand{\geq}{\geqslant}
\renewcommand\le\leqslant
\renewcommand\ge\geqslant
\newcommand\N{\mathbb{N}}
\newcommand\leN{\le_\N}
\renewcommand{\epsilon}{\varepsilon}
\newcommand{\eps}{\epsilon}
\newcommand\limind\varinjlim
\newcommand\limproj\varprojlim
\newcommand\C{\mathcal{C}}
\newcommand\D{\mathcal{D}}
\newcommand{\Hom}{\operatorname{\mathsf{Hom}}}
\newcommand{\pref}[1]{{\widehat{ #1 }}}
\newcommand\id[1]{1_{#1}}
\newcommand{\SC}{\mathcal{SC}}
\newcommand{\Pos}{\mathcal{P}\mspace{-2.mu}\it{os}}
\newcommand{\Cat}{{\mathcal{C}\mspace{-2.mu}\it{at}}}
\newcommand{\nCat}[1]{{#1}\hbox{\protect\nbd-}\kern1pt\Cat}
\newcommand{\oonCat}[1]{(\infty, #1)\hbox{\protect\nbd-}\kern1pt\Cat}
\newcommand{\ooCat}{\nCat{\infty}}
\newcommand{\ti}[1]{\tau_{\leq #1}}
\newcommand\oo{$\infty$\nbd}
\newcommand\comp\ast
\newcommand\cDelta{\mathbf{\Delta}}
\newcommand\Deltan[1]{\varDelta_{#1}}
\newcommand\EnsSimp{\pref{\cDelta}}
\newcommand{\EnsSSimp}{\pref{\cDelta'}}
\newcommand\Sd{\mathrm{Sd}}
\newcommand\cC{\mathsf{c}}
\newcommand\nN{\mathsf{N}}
\newcommand\On[1]{\mathcal{O}_{#1}}
\newcommand\Onm[2]{\mathcal{O}_{#1}^{\leq #2}}
\newcommand\etan[2]{\eta_{#1}^{\leq #2}}
\newcommand\epsn[2]{\eps_{#1}^{\le #2}}
\newcommand{\Cda}{\mathcal{C}_{\mathrm{da}}}
\newcommand{\nCda}[1]{{#1}\hbox{\protect\nbd-}\kern1pt\Cda}
\newcommand\Z{\mathbb{Z}}
\newcommand{\atom}[1]{\langle{#1}\rangle}
\newcommand{\tabld}[2]{\begin{pmatrix}#1^0_0 &\dots &#1^0_{#2-1}
  &#1^0_{#2}\cr\noalign{\vskip 3pt} #1^1_0 &\dots &#1^1_{#2-1}
  &#1^1_{#2}\end{pmatrix}}
\newcommand{\tablnu}[3]{\begin{pmatrix}#1^0_0 &\dots &#1^0_{#2-1}
  &#3^0_n\cr\noalign{\vskip 3pt} #1^1_0 &\dots &#1^1_{#2-1} &#3^1_n\end{pmatrix}}
\newcommand{\tableq}[3]{\begin{pmatrix}#1^0_0 &\dots &#1^0_{#2-1}
		&\overline{#3^0_n}\cr\noalign{\vskip 3pt} #1^1_0 &\dots &#1^1_{#2-1} &\overline{#3^1_n}\end{pmatrix}}
\newcommand{\tabll}[2]{\begin{pmatrix}#1^0_0 &#1^0_1 &\dots &#1^0_{#2-1}
  &#1^0_{#2}\cr\noalign{\vskip 3pt} #1^1_0 &#1^1_1 &\dots &#1^1_{#2-1}
  &#1^1_{#2}\end{pmatrix}}
\newcommand{\supp}{\operatorname{supp}}
\newcommand\pdfn{\texorpdfstring{$n$}{n}}
\author{Andrea Gagna}
\address{%
	Aix Marseille Univ\\
	CNRS\\
	Centrale Marseille\\
	I2M\\
	Marseille\\
	France}
\email{andrea.gagna@gmail.com}
\title[$\nCat{n}$ and $\Cda$ model homotopy types]
  {Strict \pdfn-categories and augmented directed complexes model homotopy types}
\begin{document}

\begin{abstract}
	In this paper we show that both the homotopy category of strict $n$\nbd-categ\-ories, $1\leqslant n \leqslant \infty$,
	and the homotopy category of Steiner's augmented directed
	complexes are equivalent to the category of homotopy types.
	In order to do so, we first prove a general result, based on a strategy of Fritsch and Latch,
	giving sufficient conditions
	for a nerve functor with values in simplicial sets to induce an equivalence at the level of homotopy categories.
	We then apply this result to strict $n$-categories and augmented directed complexes, where the assumption of our theorem
	were first established by Ara and Maltsiniotis and of which we give an independent proof.
\end{abstract}

\subjclass{18D05, 18G30, 18G35, 18G55, 55P10, 55P15, 55U10, 55U15, 55U35}

\keywords{strict $\infty$-categories, augmented directed complexes, simplicial sets, homotopy types, Street's nerve, orientals}

\maketitle

\section*{Introduction}

The homotopy theory of small categories was born
with the introduction by Grothendieck
of the nerve functor in~\cite{grothendieck_techniques_III},
allowing the definition of weak equivalences in
$\Cat$, the category of small categories: 
a functor is a weak equivalence precisely
when its nerve
is a simplicial weak equivalence.

The first striking result of this theory
appears in Illusie's thesis~\cite{cotangent} (who credits it
to Quillen) and states that the induced
nerve functor induces an equivalence at the level of the homotopy
categories.
A careful study of the subtle homotopy theory
of small categories by 
Thomason~\cite{Cat_closed} lead him to show
another important result: the existence
of a model structure on $\Cat$ which is Quillen equivalent
to the Kan--Quillen model structure
on simplicial sets. This implies that
small categories and simplicial sets are not
only equivalent as homotopy categories, but
actually as $(\infty, 1)$-categories.

Similarly Chiche~\cite{chiche_homotopy},
using ideas of del~Hoyo~\cite{del-Hoyo},
has showed
that strict $2$\nbd-categ\-ories model homotopy types.
Moreover, Ara and Maltsiniotis~\cite{nThomason} provided an abstract framework in which
a Quillen equivalence between strict $n$-categories and simplicial sets can be
established, using the Street nerve, as long as some conditions are satisfied;
they also showed that these conditions hold
for small $2$-categories.
In a subsequent paper~\cite{e} they proved in full generality one
of these conditions, namely condition~(e), making an important use
of augmented directed complexes introduced by Steiner~\cite{Steiner1},
which act as a linearisation of $\infty$-categories.

In the same article, they also conjectured that for every $1\leqslant n \leqslant \infty$,
the Street nerve induces an equivalence of homotopy categories
between strict $n$-categories and simplicial sets and that the same
holds for augmented directed complexes with their own nerve functor.
The aim of this paper is to prove these conjectures.

We consider a general framework with a nerve $U\colon \C \to \EnsSimp$
induced as a right adjoint of a left Kan extension $F$ of
a cosimplicial object of $\C$ along the Yoneda embedding of $\cDelta$,
where $\C$ is a cocomplete category.
Using the properties of simplicial subdivision and adapting a strategy
developed in an article by Fritsch and Latch~\cite{FritschLatch},
a short proof that $U$ induces
an equivalence at the level of homotopy categories can be given, as long as
it satisfies an appropriate version of condition~(e).
This condition says that the unit $\eta\colon 1_{\EnsSimp} \to U F$
is a weak equivalence on the nerves of posets,
that is for every poset $E$, the morphism of
simplicial sets $N_1(E) \to UF(N_1(E))$ is a weak equivalence,
where $N_1\colon \Cat \to \EnsSimp$ denotes the usual nerve.
As a first example, we give a short proof of the well\nbd-known
result stating that (ordered) simplicial complexes model homotopy types.
More interestingly, Ara and Maltsiniotis~\cite{e} have established condition~(e) for both
the nerve $N_n\colon \nCat{n} \to \EnsSimp$
of strict $n$\nbd-categories, $1\leqslant n\leqslant \infty$,
and the nerve $\nN \colon \Cda \to \EnsSimp$ of augmented directed complexes.
We can thus apply our theorem to these cases and get a proof of
the above conjectures. Nevertheless, we give an independent
proof of the condition for the cases of need which is more
simplicial in spirit, while Ara and Maltsiniotis' one is
purely \oo-categorical. In doing so, we establish some
results about truncations of orientals which we use to
extend a result of Steiner~\cite{Steiner1}: we show that a certain subcategory
of the augmented directed complexes is equivalent
to a subcategory of small \oo-categories containing the orientals
and their truncations.
This is particularly useful to define and deal with
most of the \oo-functors between \oo-categories that we will need.

Let us fix some notations and terminology.
If $A$ is a small category, the category of presheaves on $A$
will be denoted by $\pref{A}$.
If $F$ and $G$ are two functors between two categories $\C$ to $\D$
and $W$ is a class of arrows of $\D$ called weak equivalences,
we will say that a natural transformation $\alpha \colon F \to G$ is a
\ndef{weak equivalence} when it is a component-wise weak equivalence,
\ie for any object $c$ of $\C$ the arrow $\alpha_c$ is an
element of $W$. For any adjunction, we will denote by $\eta$ the unit
and $\epsilon$ the counit.

\section*{Acknowledgements}

The author would like to thank Dimitri Ara for his guidance and the
many helpful improvements he suggested  during the development of this paper.

\section{Simplicial preliminaries}

\begin{paragr}
	The \ndef{category of simplices} $\cDelta$ is the category whose
	objects are the non-empty finite ordinals $\Delta_n =\{0, 1, \dots, n\}$,
	for $n\geqslant 0$, and whose arrows are the order-preserving
	maps; the \ndef{category of semi-simplices} $\cDelta'$ is
	the category with the same objects as
	$\cDelta$ but only strictly increasing maps.
	We have a canonical inclusion functor
	$i \colon \cDelta' \to \cDelta$.
	We will denote by $i^*\colon \EnsSimp \to \EnsSSimp$
	the functor given by precomposition by $i$ and by
	$i_!\colon \EnsSSimp \to \EnsSimp$ its left adjoint.
	
	We denote by $\Cat$ the category of small categories and
	by $N_1\colon \Cat \to \EnsSimp$ and $c_1\colon \EnsSimp \to \Cat$
	the usual nerve functor and its left adjoint.
\end{paragr}

\begin{paragr}
 We shall call \ndef{weak equivalences of} $\EnsSimp$
 the usual weak homotopy equivalences and
 \ndef{weak equivalences of} $\EnsSSimp$
 the morphisms such that their
 image by $i_!$ is a weak equivalence of $\EnsSimp$.
\end{paragr}

\begin{prop}
 The adjunction morphisms $\eta \colon 1_{\EnsSSimp} \to i^*i_!$
 and $\epsilon\colon i_!i^* \to 1_{\EnsSimp}$ are weak equivalences.
\end{prop}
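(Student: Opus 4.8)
The plan is to exploit the fact that the semi-simplicial category $\cDelta'$ is cofinal in $\cDelta$ in the homotopy-theoretic sense, and that $i_!$ and $i^*$ form a Quillen adjunction between suitable model structures. First I would recall the explicit description of $i_!$: for a semi-simplicial set $X$, $i_! X$ is the simplicial set obtained by "freely adding degeneracies", and there is a classical observation (going back to Rourke--Sanderson) that the canonical map $i_! i^* K \to K$ is a trivial fibration for every simplicial set $K$; indeed it admits a section and one checks the lifting property against boundary inclusions $\partial\Deltan{n} \hookrightarrow \Deltan{n}$ using that a simplex together with the degeneracy data is determined by its nondegenerate faces. This immediately gives that $\epsilon\colon i_! i^* \to 1_{\EnsSimp}$ is a component-wise weak equivalence.

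Next, for the unit $\eta\colon 1_{\EnsSSimp} \to i^* i_!$, I would apply $i_!$ and use one of the triangle identities: the composite $i_! X \xrightarrow{i_!\eta_X} i_! i^* i_! X \xrightarrow{\epsilon_{i_! X}} i_! X$ is the identity. Since $\epsilon_{i_! X}$ is a weak equivalence of $\EnsSimp$ by the previous paragraph, two-out-of-three forces $i_!\eta_X$ to be a weak equivalence of $\EnsSimp$ as well. But by definition a morphism of $\EnsSSimp$ is a weak equivalence exactly when its image under $i_!$ is one, so $\eta_X$ is a weak equivalence of $\EnsSSimp$ for every semi-simplicial set $X$, as desired.

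The main obstacle is establishing that $\epsilon_K\colon i_! i^* K \to K$ is a weak equivalence; everything else is a formal two-out-of-three argument once that is in hand. I see two routes. The cleaner one is the direct combinatorial argument sketched above showing $\epsilon_K$ is a trivial fibration (in particular a weak equivalence), checking the right lifting property against the generating cofibrations $\partial\Deltan{n}\to\Deltan{n}$; the bookkeeping with degeneracy operators is routine but must be done carefully. An alternative is to put the projective (or a "Kan--Quillen transferred") model structure on $\EnsSSimp$ in which weak equivalences are as defined here, observe $(i_!, i^*)$ is a Quillen pair, and invoke a general criterion (e.g. that $i^*$ reflects weak equivalences and $i_!$ preserves them, together with $\epsilon$ being a weak equivalence on a generating set of cofibrant objects $i_!\Deltan{n}' \simeq \Deltan{n}$, then extending by cell induction). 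I would favour the first, self-contained route, since it keeps the section purely simplicial and does not require importing a model structure that is not otherwise used.
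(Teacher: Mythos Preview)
Your reduction of the unit to the counit via the triangle identity and 2\nbd-out\nbd-of\nbd-3 is correct and is exactly how one should conclude. The paper itself gives no argument and simply cites Proposition~2.12 of Fritsch--Latch, so supplying a self-contained proof is worthwhile.

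However, your claim that $\epsilon_K\colon i_! i^* K \to K$ is a trivial fibration (and that it admits a simplicial section) is false. Recall that an $n$\nbd-simplex of $i_! i^* K$ is a pair $(\sigma, x)$ with $\sigma\colon \Deltan{n} \twoheadrightarrow \Deltan{m}$ a surjection in $\cDelta$ and $x \in K_m$, the counit sending $(\sigma, x)$ to $\sigma^* x$. For a counterexample, take $K$ to be the quotient of $\Deltan{2}$ collapsing the face $d_2\Deltan{2}$ to a point; the image $y$ of the top cell is a nondegenerate $2$\nbd-simplex with $d_2 y = s_0 a$ for a vertex~$a$. The $1$\nbd-simplices $(\mathrm{id}, d_0 y)$, $(\mathrm{id}, d_1 y)$ and $(s^0, a)$ of $i_! i^* K$ form a compatible boundary lying over the boundary of $y$. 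A filler would be a pair $(\tau, w)$ with $\tau^* w = y$; since $y$ is nondegenerate, the Eilenberg--Zilber lemma forces $\tau = \mathrm{id}$ and $w = y$, whence $d_2(\mathrm{id}, y) = (\mathrm{id}, s_0 a) \neq (s^0, a)$ and no lift exists. The same example obstructs the obvious candidate section $x \mapsto (\mathrm{id}, x)$, which fails to commute with degeneracies precisely when a nondegenerate simplex has a degenerate face.

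The counit is nonetheless a weak equivalence, but a different argument is needed. The standard route passes through geometric realisation: $\lvert i_! i^* K\rvert$ identifies with the fat realisation $\lVert K\rVert$, and the canonical map $\lVert K\rVert \to \lvert K\rvert$ is a homotopy equivalence (this is essentially what Rourke--Sanderson establish, and it also underlies the Fritsch--Latch reference the paper cites). Once you replace the trivial-fibration claim by this realisation argument, the remainder of your proof goes through unchanged.
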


\begin{proof}
 See Proposition~2.12 of~\cite{FritschLatch}.
\end{proof}

\begin{coro}
 The functors $i_!$ and $i^*$ both respect weak equivalences
 and induce an equivalence pair at the level of homotopy categories.
\end{coro}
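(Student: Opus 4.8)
The plan is to derive the corollary formally from the previous proposition. I would begin with the easy half: $i_!$ respects weak equivalences by the very definition of the weak equivalences of $\EnsSSimp$, since those are precisely the arrows sent by $i_!$ to weak equivalences of $\EnsSimp$, so there is nothing to check. For $i^*$ I would use the counit. Given a weak equivalence $f\colon X\to Y$ of $\EnsSimp$, the naturality square of $\epsilon$
\[
\begin{tikzcd}
i_!i^*X \ar[r, "i_!i^*f"] \ar[d, "\epsilon_X"'] & i_!i^*Y \ar[d, "\epsilon_Y"] \\
X \ar[r, "f"'] & Y
\end{tikzcd}
\]
has weak equivalences for its two vertical arrows (by the proposition) and for its bottom arrow (by hypothesis), so two-out-of-three for the weak homotopy equivalences of $\EnsSimp$ shows that $i_!i^*f$ is one; by the definition of the weak equivalences of $\EnsSSimp$ this says exactly that $i^*f$ is a weak equivalence. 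Hence both functors preserve weak equivalences.

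Next I would invoke the universal property of the localizations $\mathrm{Ho}(\EnsSimp)$ and $\mathrm{Ho}(\EnsSSimp)$: since $i_!$ and $i^*$ carry weak equivalences to weak equivalences, they descend to functors $\overline{i_!}\colon \mathrm{Ho}(\EnsSSimp)\to\mathrm{Ho}(\EnsSimp)$ and $\overline{i^*}\colon \mathrm{Ho}(\EnsSimp)\to\mathrm{Ho}(\EnsSSimp)$, compatible with the localization functors. A natural transformation all of whose components are weak equivalences becomes invertible in the localized categories; applying this to $\eta$ and to $\epsilon$, which are weak equivalences by the proposition, yields natural isomorphisms $\overline{i^*}\,\overline{i_!}\cong 1_{\mathrm{Ho}(\EnsSSimp)}$ and $\overline{i_!}\,\overline{i^*}\cong 1_{\mathrm{Ho}(\EnsSimp)}$. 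Thus $\overline{i_!}$ and $\overline{i^*}$ constitute an equivalence pair between the two homotopy categories, which is the assertion.

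I do not anticipate a genuine obstacle: the argument is a formal manipulation of the two adjunction morphisms, the two-out-of-three axiom, and the universal property of localization. The one point that is not completely automatic is that $i^*$, which a priori merely precomposes with $i$ and need not be homotopically meaningful in its own right, nonetheless preserves weak equivalences — and this is exactly where the counit half of the proposition, rather than the unit half, is used.
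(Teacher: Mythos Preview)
Your argument is correct and is precisely the standard unpacking of what the paper means by ``follows immediately from the previous proposition'': preservation of weak equivalences for $i_!$ is by definition, for $i^*$ by the counit and two\nbd-out\nbd-of\nbd-three, and the equivalence of homotopy categories is obtained by noting that $\eta$ and $\epsilon$ become invertible after localization. There is no difference in approach to report.
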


\begin{proof}
 This follows immediately from the previous proposition.
\end{proof}

\begin{paragr}\label{paragr:fritsch-latch_functor}
 We denote by $\Sd\colon \EnsSimp \to \EnsSimp$ the subdivision endofunctor of simplicial sets
 introduced by Kan in~\cite[§7]{Kan} and by $\alpha\colon \Sd \to 1_{\EnsSimp}$
 the canonical natural transformation. Recall that $\alpha$
 is a weak equivalence (see~Lemma~7.5 of~\loccit).
 Therefore the previous proposition implies that the natural transformation
 \[\alpha^2\ast\epsilon \colon \Sd^2i_!i^* \to 1_{\EnsSimp}\]
 is a weak equivalence, where $\alpha^2 = \alpha  \ast \alpha$.
\end{paragr}

\begin{prop}\label{prop:poset}
 For any simplicial set $X$, the simplicial set
 $\Sd^2 i_!i^*(X)$ is isomorphic to the nerve of
 a poset.
\end{prop}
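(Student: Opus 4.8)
The plan is to reduce the statement to a classical property of iterated barycentric subdivision that makes no reference to the particular simplicial set involved. Write $Z \coloneqq i_!i^*(X)$; the only thing I will use about $Z$ is that it is a simplicial set, so it suffices to prove that $\Sd^2 Z$ is isomorphic to the nerve of a poset for an arbitrary simplicial set $Z$. Recall that a simplicial set $Y$ is called \emph{non-singular} (or \emph{regular}) when, for every non-degenerate simplex $y$ of $Y$, say of dimension $m$, the associated morphism $\Deltan{m}\to Y$ is a monomorphism. The argument will rest on two facts: (i) for every simplicial set $Y$ the subdivision $\Sd Y$ is non-singular; and (ii) if $Y$ is non-singular, then $\Sd Y$ is isomorphic to the nerve of a poset. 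Granting these, it suffices to apply (i) to $Z$ and then (ii) to $\Sd Z$, which gives that $\Sd^2 Z = \Sd(\Sd Z)$ is the nerve of a poset. This also explains why it is $\Sd^2$, and not $\Sd$, that occurs in~\ref{paragr:fritsch-latch_functor}: in general $\Sd Z$ is non-singular but need not itself be the nerve of a poset, the simplest example being $\Sd(\Deltan{1}/\partial\Deltan{1})$, which has two vertices joined by two distinct edges.

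To prove (i), I would use that $\Sd$, being a left adjoint --- its right adjoint is Kan's functor $\mathrm{Ex}$ --- commutes with colimits: writing $Z$ as a colimit of representable simplicial sets $\Deltan{n}$, the subdivision $\Sd Z$ is the corresponding colimit of the $\Sd\Deltan{n}$, each of which is the nerve of the poset of non-empty subsets of $\{0,\dots,n\}$ ordered by inclusion. A careful bookkeeping with this presentation then yields an explicit description of the non-degenerate simplices of $\Sd Z$ --- roughly, in terms of non-degenerate simplices of $Z$ together with flags of their faces --- from which one reads off that the representing map of a non-degenerate simplex of $\Sd Z$ is injective; that is, $\Sd Z$ is non-singular.

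For (ii), given a non-singular simplicial set $Y$, consider the set $E(Y)$ of non-degenerate simplices of $Y$ equipped with the \emph{face relation}: $\sigma \leq \tau$ when $\sigma = \theta^*\tau$ for some monomorphism $\theta$ of $\cDelta$. This is always a preorder, and it is antisymmetric because the identity is the only monomorphism of $\cDelta$ from an object to itself; hence $E(Y)$ is a poset. The barycenter assignment defines a canonical morphism $\Sd Y \to N_1\bigl(E(Y)\bigr)$, and the crux is that non-singularity of $Y$ makes this morphism bijective in each degree, hence an isomorphism. Taking $Y = \Sd Z$ then finishes the proof.

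The main obstacle is the combinatorial analysis behind (i) and (ii): one has to identify precisely which cells of a subdivision are non-degenerate, and to understand how the cells lying over degenerate simplices of $Z$, or over simplices whose representing map is not injective, get collapsed --- this collapsing being exactly why a single subdivision does not suffice. Both (i) and (ii) are classical, and are used in the same spirit in Thomason's construction of a model structure on $\Cat$~\cite{Cat_closed}, so a legitimate alternative would be simply to invoke them rather than reprove them here.
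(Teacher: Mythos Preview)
Your claim (i) is false: it is not true that $\Sd Y$ is non-singular for an arbitrary simplicial set $Y$, and consequently $\Sd^2 Z$ need not be the nerve of a poset for arbitrary $Z$. The functor $i_!i^*$ is doing real work that your reduction discards. A simplicial set in the image of $i_!$ has the property that every face of a non-degenerate simplex is again non-degenerate; without this, subdivision misbehaves. For a concrete counterexample to (i), take $Y$ generated by a single non-degenerate $2$\nbd-simplex $y$ with $d_0 y$ degenerate, say $d_0 y = s_0 b$ where $b = y(1) = y(2)$. In the canonical description of $(\Sd Y)_n$ by pairs $(z, S_0 \subsetneq \dots \subsetneq S_n = [\dim z])$ with $z$ non-degenerate in $Y$, the $2$\nbd-simplex $\bigl(y,\, \{1\} \subset \{1,2\} \subset \{0,1,2\}\bigr)$ is non-degenerate, yet its first two vertices both equal $b$: the second vertex is the non-degenerate part of $y|_{\{1,2\}} = d_0 y = s_0 b$, namely $b$. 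Iterating, one checks that $\Sd^2 Y$ also fails to be non-singular, hence cannot be the nerve of any poset. Note that Thomason's Lemma~5.6 in~\cite{Cat_closed}, which you invoke, only asserts that $c_1\Sd^2 Y$ is a poset; it does not say that $\Sd^2 Y$ is the nerve of that poset, and indeed the unit $\Sd^2 Y \to N_1 c_1 \Sd^2 Y$ is not an isomorphism in general.

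The paper's proof uses $i_!i^*$ essentially: it cites Fritsch--Latch to obtain that $\Sd^2 i_!i^*(X)$ is already the nerve of some category, and only then applies Thomason's lemma to upgrade that category to a poset. Your strategy can be salvaged along the same lines: since $Z = i_!i^*(X)$ lies in the image of $i_!$, every face of a non-degenerate simplex of $Z$ is non-degenerate; under this hypothesis your argument for (i) goes through (the vertices of a non-degenerate simplex of $\Sd Z$ are then faces of $z$ of strictly increasing dimension, hence distinct), after which (ii) applies to $\Sd Z$ as you wrote.
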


\begin{proof}
 By Corollary~3.11 and Remark~3.15 of~\cite{FritschLatch},
 the simplicial set $\Sd^2 i_!i^*(X)$ is isomorphic
 to the nerve of a small category, say $N_1(A)$.
 It is in fact isomorphic to the nerve of
 the category $c_1\Sd^2 i_!i^*(X)$, as we have
 \[
 \Sd^2 i_!i^*(X) \cong N_1(A) \cong N_1c_1 N_1 (A) \cong N_1 c_1\Sd^2 i_!i^*(X).
 \]
 But, for any simplicial set $Y$, the category
 $c_1 \Sd^2(Y)$ is a poset
 (see~Lemma~5.6 of~\cite{Cat_closed}).
\end{proof}

\section{An abstract Illusie-Quillen theorem}

\begin{paragr}
We fix $\C$ a cocomplete category and $j\colon \cDelta \to \C$ a cosimplicial object of $\C$.
We denote by $U$ the induced nerve functor, right adjoint to 
the left Kan extension $F \colon \EnsSimp \to \C$ of $j$ along the Yoneda embedding of $\cDelta$.
We call \ndef{weak equivalences of $\C$} the morphisms sent by $U$ to weak
equivalences of~$\EnsSimp$. In particular, $U$ induces a functor 
at the level of the homotopy categories.
\end{paragr}

\begin{paragr}
	We shall say that \ndef{$j$ satisfies condition (e)} if the unit $\eta\colon 1_{\EnsSimp} \to U F$
	is a weak equivalence on the nerves of posets, \ie if the morphism $N_1(E) \to UF(N_1(E))$ of
	simplicial sets  is a weak equivalence for any poset $E$.
\end{paragr}

\begin{remark}
We named this property after the condition (e) appearing in the abstract
Thomason Theorem (Theorem~4.11 of~\cite{nThomason}) of Ara and Maltsiniotis.
\end{remark}

\begin{thm}[Abstract Illusie--Quillen Theorem]\label{thm:illusie-quillen}
Let $j\colon \cDelta \to \C$ be a cosimplicial object satisfying condition (e).
Then the functor $U\colon \C \to \EnsSimp$ induces an equivalence
at the level of the homotopy categories.
\end{thm}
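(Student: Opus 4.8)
The plan is to deduce the theorem from the machinery of Section 1, in particular Proposition~\ref{prop:poset}, which says that for any simplicial set $X$ the simplicial set $\Sd^2 i_!i^*(X)$ is isomorphic to the nerve of a poset, together with the fact (recalled in~\ref{paragr:fritsch-latch_functor}) that the natural transformation $\alpha^2\ast\epsilon\colon \Sd^2 i_!i^* \to 1_{\EnsSimp}$ is a weak equivalence. Since $U$ sends weak equivalences of $\C$ to weak equivalences of $\EnsSimp$ by definition and induces a functor on homotopy categories, it suffices to produce a functor $\mathrm{Ho}(\EnsSimp)\to\mathrm{Ho}(\C)$ inverse to $\mathrm{Ho}(U)$. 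The natural candidate is induced by $F$, once we know $F$ respects weak equivalences; but we will not check that directly. Instead, following the Fritsch--Latch strategy, the key point is that the composite $F\circ\Sd^2 i_!i^*$ detects and reflects the comparison through posets, where condition (e) applies.

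The steps I would carry out: first, observe that for each simplicial set $X$, since $\Sd^2 i_!i^*(X)\cong N_1(E_X)$ for a poset $E_X$, condition (e) gives that the unit $\eta_{N_1(E_X)}\colon N_1(E_X)\to UF(N_1(E_X))$ is a weak equivalence, hence so is $\eta_{\Sd^2 i_!i^*(X)}\colon \Sd^2 i_!i^*(X)\to UF\Sd^2 i_!i^*(X)$. Second, form the natural transformation $UF(\alpha^2\ast\epsilon)\colon UF\Sd^2 i_!i^* \to UF$; chase the naturality square for $\eta$ against $\alpha^2\ast\epsilon$, namely the square with top $\alpha^2\ast\epsilon\colon \Sd^2 i_!i^*\to 1$, bottom $UF(\alpha^2\ast\epsilon)$, and vertical maps the units $\eta$. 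The top map is a weak equivalence and the left vertical map is a weak equivalence by the previous step, so by two-out-of-three $\eta_X\colon X\to UF(X)$ is a weak equivalence \emph{provided} $UF(\alpha^2\ast\epsilon)_X$ is a weak equivalence. Third, to get that last fact we iterate: $\alpha^2\ast\epsilon$ is a weak equivalence, so $F$ applied to it is an isomorphism after $U$ precisely when $UF$ respects that particular weak equivalence — but we can bypass this by instead applying the whole argument to the map $\alpha^2\ast\epsilon$ itself. Concretely, $F(\alpha^2\ast\epsilon)$ fits in a commuting square with $F$ of the poset-nerve comparisons, which are isomorphisms in $\mathrm{Ho}(\C)$ by condition (e) read through $U$; since $\Sd^2 i_!i^*$ of a weak equivalence is again a weak equivalence between nerves of posets, a diagram chase shows $UF$ sends weak equivalences \emph{between simplicial sets of the form $\Sd^2 i_!i^*(-)$} to weak equivalences, which is exactly what is needed to close the two-out-of-three argument above.

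Assembling this: the unit $\eta\colon 1_{\EnsSimp}\to UF$ is then a weak equivalence on \emph{every} simplicial set, and a symmetric argument — or the now-established fact that $F$ sends weak equivalences to weak equivalences, obtained by writing any weak equivalence $X\to Y$ in the homotopy category as a zig-zag and comparing with its $\Sd^2 i_!i^*$ replacement via the weak equivalence $\alpha^2\ast\epsilon$ — shows that $\mathrm{Ho}(F)$ is defined and that the counit $\epsilon^{F\dashv U}\colon FU\to 1_{\C}$ becomes an isomorphism in $\mathrm{Ho}(\C)$ after applying $U$, hence is a weak equivalence of $\C$, hence an isomorphism in $\mathrm{Ho}(\C)$. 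Therefore $\mathrm{Ho}(U)$ and $\mathrm{Ho}(F)$ are mutually inverse equivalences. The main obstacle I anticipate is precisely the bootstrapping in the third step: condition (e) only controls $UF$ on nerves of posets, so one must be careful that every intermediate object appearing in the two-out-of-three chases (the sources and targets of $\alpha^2\ast\epsilon$, and of its $\Sd^2 i_!i^*$-iterate) is genuinely of that form, which is guaranteed by Proposition~\ref{prop:poset} but needs to be invoked at each stage rather than once.
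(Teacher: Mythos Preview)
There is a genuine gap. The target you set yourself in the second and third steps --- that $\eta_X\colon X \to UF(X)$ is a weak equivalence for \emph{every} simplicial set $X$ --- is not a consequence of condition~(e), and is in fact false in the basic example $\C=\Cat$ with $j$ the inclusion $\cDelta\hookrightarrow\Cat$. Here condition~(e) holds trivially since $c_1 N_1\cong 1$, but for $X=\Deltan{2}/\partial\Deltan{2}$ one computes $c_1(X)\cong\ast$, so $\eta_X$ is the map from a simplicial $2$\nbd-sphere to a point, which is not a weak equivalence. In particular $F$ need not preserve weak equivalences, and $F$ cannot serve as the homotopy inverse you want.

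The precise failure is in your bootstrapping: to close the two\nbd-out\nbd-of\nbd-three you need $UF(\gamma_X)$ to be a weak equivalence, where $\gamma=\alpha^2\ast\epsilon$. Your observation that $UF$ preserves weak equivalences \emph{between nerves of posets} is correct (that is exactly the naturality square for $\eta$ with both vertical legs controlled by condition~(e)), but $\gamma_X\colon Q(X)\to X$ is not such a map, since its target $X$ is arbitrary. So the claim ``which is exactly what is needed'' is where the argument breaks.

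The paper sidesteps this by taking $FQ$, not $F$, as the homotopy inverse. Since $Q$ lands in nerves of posets, condition~(e) gives directly that $\eta\ast Q\colon Q\to UFQ$ is a weak equivalence; together with the weak equivalence $\gamma\colon Q\to 1_{\EnsSimp}$ this shows simultaneously that $FQ$ preserves weak equivalences and that $U\,FQ\simeq 1$ in homotopy. For the other composite one considers $FQU\xrightarrow{F\ast\gamma\ast U}FU\xrightarrow{\epsilon}1_\C$ and checks that its image under $U$ is a weak equivalence using the triangle identity $(U\ast\epsilon)\circ(\eta\ast U)=1_U$: in the commutative diagram with bottom row $QU\xrightarrow{\gamma\ast U}U$ and vertical maps $\eta\ast QU$, $\eta\ast U$, the left vertical is a weak equivalence by condition~(e) (as $QU$ lands in nerves of posets) and $\gamma\ast U$ is one by hypothesis, so two\nbd-out\nbd-of\nbd-three finishes. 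No statement about $\eta$ on arbitrary simplicial sets is ever needed.
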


\begin{proof}
Let $Q\colon\EnsSimp \to \EnsSimp$ be a functor
such that for every simplicial set $X$, the simplicial set
$Q(X)$ is isomorphic to the nerve of a poset
and $\gamma \colon Q \to 1_\EnsSimp$ a natural transformation
which is a weak equivalence.
For instance, we can choose the functor $\Sd^2 i_! i^*$  as $Q$
by virtue of Proposition~\ref{prop:poset} and as $\gamma$
the natural transformation $\Sd^2 i_!i^* \to 1_\EnsSimp$ considered
in paragraph~\ref{paragr:fritsch-latch_functor}.

We will show that $F Q$ respects the weak equivalences and gives an inverse in
homotopy of the functor induced by $U$.
We have a zig-zag of natural transformations
\[
  \begin{tikzcd}
	U F Q &
	Q \ar[l, "\eta\ast Q"'] \ar[r, "\gamma"] &
	1_{\EnsSimp} .
  \end{tikzcd}
\]

Condition (e) and the fact that $Q$ takes values in nerves of posets
show that $\eta\ast Q$ is a weak equivalence;
by definition, $\gamma$ is so too.
This establishes that $FQ$ preserves weak equivalences and that
it is a left inverse for $U$ in homotopy.

Moreover, we have a natural transformation
\[
 \begin{tikzcd}
  F Q U \ar[rr, "F\ast \gamma \ast U"] &&
  F U \ar[r, "\epsilon"] & 1_{\C}
 \end{tikzcd} .
\]

We have to show that
\[
 \begin{tikzcd}
  U F Q U \ar[rr, "UF\ast \gamma \ast U"] &&
  U F U \ar[r, "U\ast \epsilon"] & U
 \end{tikzcd}
\]
is a weak equivalence. Considering the commutative diagram
\[
 \begin{tikzcd}
  U F Q U \ar[rr, "UF\ast \gamma \ast U"] &&
  U F U \ar[r, "U\ast\epsilon"] & U\\
  Q U \ar[u, "\eta \ast QU"]\ar[rr, "\gamma\ast U"'] &&
  U \ar[u, "\eta\ast U"] \ar[ur, "1_{U}"']
 \end{tikzcd}\ ,
\]
we note that $\eta\ast QU$ is a weak equivalence for the same
reason as above, that is by condition (e), and that $\gamma \ast U$ is a
weak equivalence by definition. By virtue of the 2\nbd-out\nbd-of\nbd-3 for the weak
equivalences of $\C$, this proves the theorem.
\end{proof}

As a first application of the above theorem, we give a
short proof of the classical result stating that simplicial
complexes model homotopy types. The next sections
will be devoted to formulate and apply the
previous theorem to strict $n$\nbd-categories
and directed augmented complexes.

\begin{paragr}
	A simplicial complex is a pair $(E, \Phi)$,
	where $E$ is a poset and $\Phi$ is a set of non-empty linearly ordered
	finite subsets of $E$
	such that, for any $x$ in $E$, $\{x\}$ is in $\Phi$ and
	if $S$ is in $\Phi$, then for any non-empty $S'\subset S$
	we have that $S'$ is in $\Phi$ too.
	A morphism of simplicial complexes $f \colon (E, \Phi) \to (F,\Psi)$
	is a non-decreasing application $f\colon E \to F$ such that,
	for any $S$ in $\Phi$, $f(S)$ is in $\Psi$.
	
	Any poset $E$ can be see canonically as the simplicial complex
	$(E, \xi E)$, where $\xi E$ is the set of non-empty finite subsets of $E$.
	There is then a canonical embedding functor of the category of posets
	into the category of simplicial complexes. In particular, we get a
	canonical cosimplicial object $\kappa \colon \cDelta \to \SC$.
\end{paragr}

\begin{paragr}\label{paragr:SC_cocomplete}
	The category $\SC$ of simplicial complexes is cocomplete.
	Indeed, if $F\colon I \to \SC$ is a functor indexed by a small
	category $I$ and we denote by $(E_i, \Phi_i)$ the
	simplicial complex $F(i)$, for $i$ an object of $I$,
	then the colimit of $F$ is the simplicial complex $(E, \Phi)$,
	where $E$ is the colimit $\limind_I E_i$ computed in the
	category of posets and a subset $S$ of $E$ is in $\Phi$
	if and only if there exists an object $i$ of $I$
	and $S_i$ in $\Phi_i$ such that the image of $S_i$
	by the canonical morphism $E_i \to E$ is precisely $S$.	
	
	In particular, the cosimplicial object $\kappa \colon \cDelta \to \SC$,
	mapping a simplex $\Deltan{n}$ to the simplicial complex $(\Deltan{n}, \xi \Deltan{n})$,
	induces by Kan extension an adjoint pair
	\[
	 \kappa_! \colon \EnsSimp \to \SC\quad\text{and}\quad
	 \kappa^* \colon \SC \to \EnsSimp .
	\]
	
	We define the \emph{weak equivalences of $\SC$} to
	be the morphisms of simplicial complexes whose image
	by $\kappa^*$ is a simplicial weak equivalence.
\end{paragr}

\begin{remark}\label{remark:posets_as_simplicial_complexes}
	For any poset $E$, notice that the simplicial set
	$k^*(E, \xi E)$ is isomorphic to $N_1(E)$ the nerve of $E$
	(cf. paragraph~8.4 of~\cite{e}). By the triangular identities
	of the adjoint pair $(k_!, k^*)$ and the 2\nbd-out\nbd-of\nbd-3,
	in order to establish condition (e)
	for the cosimplicial object $\kappa \colon \cDelta \to \SC$
	it is enough to show that the counit morphism
	$\kappa_!\kappa^* \to \id{\SC}$ is a weak equivalence when
	evaluated to posets, \ie simplicial complexes of the form~$(E, \xi E)$.
\end{remark}

\begin{thm}
	The functor $\kappa^* \colon \SC \to \EnsSimp$
	induces an equivalence at the level of homotopy categories.
	In particular, simplicial complexes model homotopy types.
\end{thm}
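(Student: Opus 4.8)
The plan is to verify that the cosimplicial object $\kappa\colon\cDelta\to\SC$ satisfies condition~(e) and then invoke Theorem~\ref{thm:illusie-quillen}. By Remark~\ref{remark:posets_as_simplicial_complexes} it suffices to show that for every poset $E$ the counit $\kappa_!\kappa^*(E,\xi E)\to(E,\xi E)$ is a weak equivalence, equivalently (applying $\kappa^*$ and using $\kappa^*(E,\xi E)\cong N_1(E)$) that the composite $N_1(E)\to\kappa^*\kappa_!(N_1(E))\to N_1(E)$ is a weak equivalence, hence that $N_1(E)\to\kappa^*\kappa_!N_1(E)$ is one. Unwinding $\kappa_!$: for a simplicial set $X$, $\kappa_!(X)$ is the colimit over the simplex category of $X$ of the complexes $(\Deltan{n},\xi\Deltan{n})$, and by the description of colimits in $\SC$ in paragraph~\ref{paragr:SC_cocomplete}, the underlying poset of $\kappa_!(X)$ is the poset of nondegenerate simplices of $X$ ordered by the face relation (equivalently $c_1$ of a subdivision-type construction), with $\Phi$ the subsets that are the vertex set of some simplex. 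The key point is that when $X=N_1(E)$ for a poset $E$, this is exactly the \emph{ordered simplicial complex associated to $E$}, whose simplices are the finite chains of $E$; its geometric realization is the order complex of $E$.

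The heart of the argument is therefore the classical fact that, for a poset $E$, the canonical map from the order complex of $E$ to $N_1(E)$ is a weak equivalence. I would prove this by exhibiting it as (isomorphic to) a known weak equivalence: the nerve of the chains of $E$ is naturally isomorphic to $\Sd(N_1(E))$, the barycentric subdivision, and the canonical map to $N_1(E)$ corresponds under this identification to the last-vertex (or Kan's $\alpha$) map $\Sd(N_1(E))\to N_1(E)$, which is a weak equivalence by Lemma~7.5 of~\cite{Kan} (already recalled in paragraph~\ref{paragr:fritsch-latch_functor}). Concretely one checks that $c_1\Sd(N_1 E)$ is the poset of finite nonempty chains of $E$ ordered by inclusion, that $\kappa_!N_1(E)$ has the same underlying poset with the full chain complex as its $\Phi$, and that $\kappa^*$ of the latter recovers $N_1$ of that poset; naturality of all these identifications in $E$ pins the map down to $\alpha$.

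The main obstacle I anticipate is the bookkeeping identifying $\kappa^*\kappa_!N_1(E)$ precisely with $\Sd(N_1 E)$ and the counit-induced map with $\alpha$: one must be careful that $\kappa^*$ of a simplicial complex of the form $(P,\xi P)$ with $P$ itself only a poset (not a totally ordered set) is $N_1(P)$ rather than something smaller, and that the simplices of $\kappa_! N_1(E)$ are closed under passing to subsets, so that $\Phi=\xi(\text{chains})$ as needed. Once this identification is in place, combining it with Kan's lemma gives condition~(e) for $\kappa$, and Theorem~\ref{thm:illusie-quillen} yields that $\kappa^*\colon\SC\to\EnsSimp$ induces an equivalence of homotopy categories; since the homotopy category of $\EnsSimp$ is the category of homotopy types, this shows simplicial complexes model homotopy types.
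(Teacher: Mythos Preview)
Your computation of $\kappa_!(N_1(E))$ is incorrect, and this is the crux of the argument. By the description of colimits in $\SC$ recalled in paragraph~\ref{paragr:SC_cocomplete}, the underlying poset of $\kappa_!(X)$ is the colimit in $\Pos$ of the functor $\cDelta/X \to \Pos$ sending $(\Deltan{n},\phi)$ to the poset $\Deltan{n}$; this colimit is precisely $c_1(X)$, the left adjoint of $N_1\colon\Pos\to\EnsSimp$ applied to $X$, not the poset of nondegenerate simplices of $X$ under the face relation. (Test this on $X=\Deltan{1}$: your description would give a three\nbd-element poset, whereas $\kappa_!(\Deltan{1})=(\Deltan{1},\xi\Deltan{1})$ by Yoneda, whose underlying poset has two elements.) For $X=N_1(E)$ one therefore gets $c_1N_1(E)\cong E$, since $N_1$ is fully faithful; the set $\Phi'$ of simplices is then checked to be all of $\xi E$, because every finite nonempty chain $S\subset E$ is the image of the unique injective $\Deltan{p}\to E$ with image $S$.

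In other words, the counit $\kappa_!\kappa^*(E,\xi E)\to(E,\xi E)$ is an \emph{isomorphism}, not merely a weak equivalence, and no appeal to subdivision or to Kan's map $\alpha$ is needed. Your identification of $\kappa^*\kappa_!N_1(E)$ with $\Sd(N_1(E))$ is thus based on a miscalculation of the colimit in $\SC$; once that is corrected, the bookkeeping you were worried about disappears entirely and the proof is a direct computation.
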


\begin{proof}
	We show that the counit
	morphism $\kappa_!\kappa^* \to \id{\SC}$ is an isomorphism when
	evaluated to posets.
	By Theorem~\ref{thm:illusie-quillen} and the previous remark,
	this proves the theorem.
	
	Let $E$ be a poset. The simplicial complex
	$(E', \Phi') = k_!k^*(E, \xi E)$ is the colimit
	of the canonical functor $\cDelta/k^*(E, \xi E) \to \SC$,
	where $\cDelta/k^*(E, \xi E)$ is the category of elements of the simplicial set $k^*(E, \xi E)$;
	hence it is the colimit of the canonical functor $\cDelta/N_1(E) \to \SC$.
	The poset $E'$ is computed as the colimit of the functor
	$\cDelta/N_1(E) \to \SC \to \Pos$, where the functor $\SC \to \Pos$
	is the left adjoint to the embedding $\Pos \to \SC$ and sends a simplicial complex $(F, \Psi)$
	to the poset $F$. Now, if we consider the ajoint pair
	\[
	 c_1 \colon \EnsSimp \to \Pos \quad \text{and} \quad N_1 \colon \Pos \to \EnsSimp ,
	\]
	the colimit $E'$ of the canonical functor $\cDelta/N_1(E) \to \Pos$
	is given by $c_1 N_1(E)$ and, since the nerve $N_1$ is fully faithful,
	we have the isomorphisms
	\[
	 c_1N_1(E) \cong E \cong E'.
	\]
	We are left to check that $\Phi'$ is $\xi E$. For let $S$ be a non-empty
	finite linear order subset $S$ of $E$. There exists a unique
	object $(\Deltan{p}, \phi)$ of $\cDelta/N_1(E)$ such that $\phi \colon \Deltan{p} \to E$
	is injective and its image is precisely $S$. The explicit description given
	in paragraph~\ref{paragr:SC_cocomplete}
	of the colimits of simplicial complexes shows that $\Phi'$ contains $S$
	and therefore we get $\Phi' = \xi E$, which concludes the proof of the theorem.
\end{proof}

\section{Recollections on Steiner's theory}

\begin{paragr}
 The notion of \emph{augmented directed complex} was introduced by Steiner in~\cite{Steiner1}
 and consists of a triple~$(K, K^*, e)$ where
 \[
  K = 
  \begin{tikzcd}
   \cdots \ar[r, "d_{i+1}"] &
   K_i \ar[r, "d_i"] &
   K_{i-1} \ar[r, "d_{i-1}"] &
   \cdots \ar[r, "d_1"] &
   K_0
  \end{tikzcd}
 \]
 is a (homological) chain complex of abelian groups in non-negative degrees,
 $e \colon K_0 \to \Z$ is an augmentation, that is it satisfies $ed_1 = 0$, and
 $K^* = (K^*_i)_{i\geq 0}$ is a collection of abelian monoids such that for any
 $i \geq 0$ the component $K^*_i$ is a submonoid of the abelian group $K_i$.
 Remark that we do not ask any compatibility of the submonoids with respect to
 the differentials or the augmentation.
 
 For any $i\geq 0$ we shall call \emph{$i$-chains} the elements of $K_i$ and the
 elements of $K^*_i$ shall be called \emph{positive $i$-chains}.
 In fact, we can define a preorder on $K_i$ by setting
 \[
  x \leq y\quad\mbox{if and only if}\quad y - x \in K^*_i\,,
 \]
 for which we have
 \[
  K^*_i = \{x \in K_i : x_i \geq 0\}.
 \]
 
 A morphism from~$(K, K^* , e)$
 to $(K', K'^*, e')$ of augmented directed chain complexes  is a morphism of chain complexes $f \colon K \to K'$
 which respects the positivity and the augmentation, \ie we have
 that $f_i(K^*_i)$ is a submonoid of $K'^*_i$ for any $i\geq 0$ and $e'f_0 = e$.
 We denote by $\Cda$ the category of augmented directed complexes. We shall
 often identify an object with its underlying chain complex.
\end{paragr}

\begin{paragr}\label{paragr:basis}
 A \emph{basis} of an augmented directed complex $K$ is a graded set
 $B = (B_i)_{i\geq 0}$ such that, for any $i\geq 0$, the set $B_i$
 is both a basis for the $\Z$-module $K_i$ and a set of generators
 for the submonoid $K^*_i$ of $K_i$.
 If a complex has a basis, for any $i\geq 0$ the preorder relation
 of positivity on $K_i$ defined in the previous paragraph
  is a partial order relation and the elements $B_i$
 of the basis are the minimal elements of $K_i^*\setminus \{0\}$;
 in particular, if a basis exists then it is unique. When an augmented directed complex
 has a basis, we shall say
 that the complex is \emph{with basis}.
 
 Let $K$ be an augmented directed complex with basis $B$.
 For any $i\geq 0$, an $i$\nbd-chain~$x$ can be written
 uniquely as a linear combination of elements of $B_i$
 with integral coefficients. The \emph{support} of $x$, denoted by
 $\supp(x)$,
 is the (finite) set of elements of the basis appearing in this
 linear combination with non-zero coefficient. We can
 write the $i$-chain uniquely as the difference of two
 positive $i$-chains with disjoint supports $x = x_+ - x_-$.
 We define a matrix
 \[
  \atom{x}=\tabll{\atom{x}}{i},
\]
where the elements $\atom{x}^\varepsilon_k$ are inductively defined by:
\begin{itemize}
\item $\atom{x}^0_i = x = \atom{x}^1_i$;
\smallskip

\item $\atom{x}^0_{k - 1} = d(\atom{x}^0_k)_-$ et $\atom{x}^1_{k - 1} = d(\atom{x}^1_k)_+$\,, \,for $0 < k \leq i$\,.
\end{itemize}
We say that an augmented directed complex $K$ with basis $B$ is \emph{unital}
if, for any $i\geq 0$ and any $x$ in $B_i$, we have the equality
$e(\atom{x}^0_0) = 1 = e(\atom{x}^1_0)$.
\end{paragr}

\begin{paragr}\label{paragr:Steiner}
 Let $K$ be an augmented directed complex with basis $B$.
 For $i\geq 0$, we denote by $\leq_i$ the smallest
 preorder relation on $B = \coprod_i B_i$ satisfying
 \[
 x \leq_i y \quad\text{if}\quad |x|>i, |y|>i\text{ and }\supp{(\atom{x}^1_i)}\cap \supp{(\atom{y}^0_i)} \neq 0.
 \]
 We say that the base $B$ is \emph{loop-free} if, for any $i\geq 0$,
 the preorder relation $\leq_i$ is a partial order relation.
 We call \emph{Steiner complex}
 an augmented directed complex $K$ with unital
 and loop-free basis $B$. 
\end{paragr}

We remind the definition of
the pair of adjoint functors $\lambda \colon \ooCat \to \Cda$
and $\nu \colon \Cda \to \ooCat$
introduced by Steiner in~\cite{Steiner1}.

\begin{paragr}
	Let $C$ be a small $\infty$-category. For $i\geq 0$,
	the abelian group $\lambda(C)_i$ is generated by the
	elements of the form $[x]$, where $x$ is an
	$i$\nbd-cell of $C$, and by the relations
	\[
	[x \ast_j y] = [x] + [y],
	\]
	where $x$ and $y$ are two $j$\nbd-composable $i$\nbd-cells of $C$.
	The positivity submonoid $\lambda(C)_i^\ast$ is the submonoid generated by
	the elements $[x]$, for $x$ an $i$\nbd-cell of $C$.
	For $i > 0$, the differential $d_i \colon \lambda(C)_i \to \lambda(C)_{i-1}$
	of an element $[x]$, where $x$ is an $i$\nbd-cell of $C$, is defined by
	\[
	d_i([x]) = [t(x)] - [s(x)].
	\]
	Finally the augmentation $\eps \colon \lambda(C)_0 \to \Z$
	is the unique morphism of abelian groups sending, for any
	object $x$ of $C$, the generating element $[x]$ to $1$.
	
	If $u \colon C \to D$ is an $\infty$-functor, for $i\geq 0$ we
	define the morphism $\lambda(u)_i$ 
	by sending the generating element $[x]$ of $\lambda(C)_i$ to the generating element $[u(x)]$ of $\lambda(D)_i$.
	It is easy to check that this defines a morphism of augmented directed complexes
	from $\lambda(C)$ to $\lambda(D)$ and that $\lambda$ is a functor.	
\end{paragr}

\begin{paragr}\label{paragr:atom}
	Let $K$ be an augmented directed complex. For $i \ge 0$, the $i$-cells
	of $\nu(K)$ are the matrices
	\[
	\tabld{x}{i}
	\]
	such that
	\begin{enumerate}
		\item $x^\epsilon_k$ belongs to $K^\ast_k$ for $\epsilon = 0, 1$ and $0
		\le k \le i$ ;
		\item $d(x^\epsilon_k) = x^1_{k-1} - x^0_{k-1}$ for $\epsilon = 0, 1$ and $0
		< k \le i$ ;
		\item $e(x^\epsilon_0) = 1$ for $\epsilon = 0, 1$ ;
		\item $x_i^0 = x_i^1$.
	\end{enumerate}
	We refer to~\cite[§2]{joint} for a full description of the \oo-categorical
	structures, where the same notations are used.
	
	We remark that if an augmented directed complex $K$ has unital basis,
	then for any element $x$ of the basis of $K$, the matrix $\atom{x}$
	introduced in paragraph~\ref{paragr:basis}
	defines a cell of $\nu(K)$, that we call the \emph{atom} associated to $x$.
\end{paragr}

\begin{thm}[Steiner]\label{thm:equivalence_Steiner}
 The functors
 \[
  \lambda \colon \ooCat \to \Cda, \qquad \nu \colon \Cda \to \ooCat
 \]
 form an adjoint pair and for any Steiner complex $K$ the counit
 \[
  \lambda(\nu(K)) \to K
 \]
 is an isomorphism. In particular, the functor $\nu \colon \Cda \to \ooCat$
 is fully faithful when restricted to the full subcategory of $\Cda$ spanned
 by Steiner complexes.
\end{thm}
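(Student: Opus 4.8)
The plan is to establish the adjunction by unwinding the explicit descriptions of $\lambda$ and $\nu$, then to identify the counit, to reduce the assertion that it is invertible on Steiner complexes to a single generation statement, to prove that statement by induction using loop-freeness, and finally to derive the full faithfulness assertion formally. For the adjunction, one uses that $\lambda(C)$ is presented by generators $[x]$, one for each cell $x$ of the \oo-category $C$, subject to additivity under compositions: thus a morphism $f\colon\lambda(C)\to K$ in $\Cda$ is exactly a family of positive chains $f[x]\in K^\ast_{|x|}$ satisfying $f[x\ast_j y]=f[x]+f[y]$, $d(f[x])=f[t(x)]-f[s(x)]$ and $e(f[x])=1$ for $x$ an object; whereas, by paragraph~\ref{paragr:atom}, an $i$\nbd-cell of $\nu(K)$ is a matrix $(x^\epsilon_k)$ of positive chains, with $0\le k\le i$ and $\epsilon\in\{0,1\}$, whose defining conditions are, at the level of the iterated sources and targets of the cell, the translation of exactly these requirements. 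One therefore sends an \oo-functor $g\colon C\to\nu(K)$ to the morphism $f$ with $f[x]=g(x)^0_{|x|}=g(x)^1_{|x|}$, and conversely an $f$ as above to the \oo-functor carrying an $i$\nbd-cell $x$ of $C$ to the matrix with entries $f[s_k(x)]$ in position $(0,k)$, $f[t_k(x)]$ in position $(1,k)$, and $f[x]$ on top. Using the globular identities in $C$ and the explicit \oo-categorical structure on $\nu(K)$ recalled in~\cite[§2]{joint}, one checks that these assignments are well defined, mutually inverse, and natural in $C$ and $K$; this is lengthy but entirely routine bookkeeping.

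Taking $C=\nu(K)$ and $g=\id{\nu(K)}$ in the bijection just obtained identifies the counit $\epsilon_K\colon\lambda\nu(K)\to K$ with the morphism sending the class $[x]$ of an $i$\nbd-cell of $\nu(K)$ to its top entry $x^0_i=x^1_i$. Let $K$ be a Steiner complex with basis $B=(B_i)$. For $b\in B_i$ the matrix $\atom{b}$ is a cell of $\nu(K)$ — this is where \emph{unitality} is used, via condition~(3) of paragraph~\ref{paragr:atom} — and $\epsilon_K[\atom{b}]=b$. Since the relations defining $\lambda$ force $[\id{y}]=0$ and $[x\ast_j y]=[x]+[y]$, the theorem reduces to proving that, for every Steiner complex $K$ and every $i$\nbd-cell $x$ of $\nu(K)$, the class $[x]$ is a non-negative integral combination of the classes $[\atom{b}]$ with $b\in\supp(x^1_i)$. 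Granting this, the classes $[\atom{b}]$ for $b\in B_i$ generate $\lambda\nu(K)_i$; since $\epsilon_K$ sends them to the pairwise distinct basis elements $b$ of $K_i$ they are also linearly independent, hence form a basis of $\lambda\nu(K)_i$ which $\epsilon_K$ matches bijectively with the basis $B_i$ of $K_i$; and the non-negativity of the combinations shows that $\epsilon_K$ likewise identifies the positivity submonoid $\lambda\nu(K)^\ast_i$ with $K^\ast_i$. As $\epsilon_K$ also commutes with the augmentations, it is an isomorphism of augmented directed complexes. The full faithfulness is then formal: for Steiner complexes $K$ and $K'$, the map induced by $\nu$ is, by the triangular identities, the composite of $h\mapsto h\circ\epsilon_K$ with the adjunction isomorphism $\Hom_{\Cda}(\lambda\nu(K),K')\cong\Hom_{\ooCat}(\nu(K),\nu(K'))$; since $\epsilon_K$ is invertible both are bijections, hence so is the map induced by $\nu$.

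The main obstacle is the generation statement, which I expect to be by far the most delicate step. The natural approach is to prove the stronger claim that every $i$\nbd-cell $x$ of $\nu(K)$ is an iterated $\ast_j$\nbd-composite, with $j<i$, of atoms $\atom{b}$ for $b\in\supp(x^1_i)$ together with units of lower-dimensional cells; then $[x]=\sum_b[\atom{b}]$ in $\lambda\nu(K)_i$, counted with multiplicity and so a non-negative combination, and $\epsilon_K[x]=x^1_i$ as it must be. One argues by induction on the cardinality of $\supp(x^1_i)$: if this set is empty, $x$ is a unit and $[x]=0$; otherwise one uses that $K$ is \emph{loop-free}, so that the preorder $\leq_{i-1}$ of paragraph~\ref{paragr:Steiner} restricts to a genuine partial order on the finite set $\supp(x^1_i)$, and one selects a $\leq_{i-1}$\nbd-maximal element $b$ of it. The crux is then to peel off the atom $\atom{b}$, that is, to exhibit $x$ as a composite, along suitable $(i-1)$\nbd-dimensional cells, of $\atom{b}$ with cells of $\nu(K)$ whose top supports lie in $\supp(x^1_i)$ and are strictly smaller, so that the inductive hypothesis applies. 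Verifying that the prescribed data actually define cells of $\nu(K)$ — positivity of the chains, the differential compatibility~(2) of paragraph~\ref{paragr:atom}, and the fact that the composite is defined and equals $x$ — is the combinatorial core of the theorem, and it is precisely here, through the maximality of $b$, that the loop-free hypothesis is used in an essential way.
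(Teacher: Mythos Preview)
The paper does not actually prove this theorem: its proof consists solely of the citation ``See Theorem~2.11 and Theorem~5.6 of~\cite{Steiner1}''. There is therefore nothing in the present paper to compare your sketch against; the relevant comparison is with Steiner's original argument, and in that respect your outline is faithful. The adjunction is indeed established by the explicit bijection you describe (this is Steiner's Theorem~2.11), and the decomposition of an arbitrary cell of $\nu(K)$ into a composite of atoms --- by induction on the size of the top support, peeling off a $\leq_{i-1}$\nbd-extremal basis element at each step --- is precisely the strategy underlying Steiner's Theorem~5.6. You are also honest about where the genuine work lies: verifying that, after peeling off $\atom{b}$, the residual data still define cells of $\nu(K)$ --- in particular that all the intermediate chains remain \emph{positive} --- is the delicate combinatorial core, and it is exactly there that loop-freeness is used in an essential way. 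Your sketch is a correct outline; completing it would amount to reproducing several pages of~\cite{Steiner1}, which is why the paper is content to cite rather than reprove.
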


\begin{proof}
 See~Theorem~2.11 and~Theorem~5.6 of~\cite{Steiner1}.
\end{proof}
  
\begin{paragr}\label{paragr:def_le_N}
  Let $K$ be an augmented directed complex with basis $B$.
  We shall denote by~$\leN$ the smallest preorder relation on $B$
  satisfying
  \[
    x \leN y \quad\text{if}\quad x \in \supp(d(y)_-) \text{ or }
    y \in \supp(d(x)_+),
  \]
  where we fixed by convention $d(b) = 0$ if $b$ belongs to $B_0$.
  
  We shall say that a basis $B$ is \emph{strongly loop-free}
  if the preorder relation $\leN$ is actually a partial
  order relation and
  we shall call \emph{strong Steiner complex}
 an augmented directed complex $K$ with unital
 and strongly loop-free basis $B$. 
\end{paragr}

\begin{prop}[Steiner]
Let $K$ be an augmented directed complex with basis $B$.
If the basis $B$ is strongly loop-free, then it is loop-free.
\end{prop}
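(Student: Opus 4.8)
The plan is to establish, for every $i \geq 0$, the inclusion of relations $\leq_i\ \subseteq\ \leN$. Granting this, since $\leN$ is a partial order by the hypothesis of strong loop\nbd-freeness and $\leq_i$ is a preorder by construction, the inclusion forces $\leq_i$ to be antisymmetric, hence a partial order; as $i$ is arbitrary, $B$ is loop\nbd-free.

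The crux is a lemma situating, with respect to $\leN$, the supports of the chains appearing in the matrix $\atom{x}$ of a basis element. Observe first that every entry $\atom{x}^\epsilon_k$ is a positive chain, being either $x$ itself or a positive or negative part of a chain. The claim is that for $x$ in $B_n$ and $0 \leq k \leq n$, every $a \in \supp(\atom{x}^1_k)$ satisfies $x \leN a$, and every $a \in \supp(\atom{x}^0_k)$ satisfies $a \leN x$. I would prove this by descending induction on $k$: the case $k = n$ is trivial as $\atom{x}^1_n = \atom{x}^0_n = x$. For the inductive step, write the positive chain $\atom{x}^1_{k+1}$ as $\sum_b c_b\, b$ with $c_b \geq 0$ over basis elements $b$; then $d(\atom{x}^1_{k+1}) = \sum_b c_b\, d(b)_+ - \sum_b c_b\, d(b)_-$, and since the positive part of a difference of two positive chains is bounded above by the first of them (here $\sum_b c_b\, d(b)_+$), one obtains
\[
\supp(\atom{x}^1_k) \subseteq \bigcup_{b \in \supp(\atom{x}^1_{k+1})} \supp(d(b)_+) .
\]
Hence any $a \in \supp(\atom{x}^1_k)$ lies in $\supp(d(b)_+)$ for some $b \in \supp(\atom{x}^1_{k+1})$, so $b \leN a$ by the defining relation of $\leN$ and $x \leN b$ by the inductive hypothesis, whence $x \leN a$. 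The statement for $\atom{x}^0_k$ is symmetric, using $\atom{x}^0_k = d(\atom{x}^0_{k+1})_-$, the analogous inclusion with $d(b)_-$, the relation $a \leN b$ for $a \in \supp(d(b)_-)$, and the inductive hypothesis $b \leN x$.

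It then remains to verify that the generating pairs of $\leq_i$ lie in $\leN$: if $|x|, |y| > i$ and some $b$ belongs to $\supp(\atom{x}^1_i) \cap \supp(\atom{y}^0_i)$, the lemma gives $x \leN b$ and $b \leN y$, hence $x \leN y$ by transitivity. As $\leN$ is a preorder containing all such pairs, it contains the smallest preorder containing them, which is $\leq_i$ by definition, and the argument concludes as indicated. The reasoning is essentially formal, and I do not foresee a genuine obstacle; the one point needing care is the support inclusion in the lemma, which rests on the elementary fact that passing to a positive or negative part can only shrink supports, so that no basis element occurs in $d(\atom{x}^\epsilon_{k+1})_\pm$ that is not already accounted for by some $b$ in the support of $\atom{x}^\epsilon_{k+1}$. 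Note finally that neither unitality of $B$ nor loop\nbd-freeness of any intermediate relation is used: the full hypothesis enters only at the very last step, to pass from $\leq_i\ \subseteq\ \leN$ to antisymmetry of $\leq_i$.
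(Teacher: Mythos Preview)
Your argument is correct. The paper itself does not give a proof of this proposition: it simply cites Steiner's original article (Proposition~3.7 of~\cite{Steiner1}). What you have written is essentially the standard proof found there, namely establishing the inclusion $\leq_i\ \subseteq\ \leN$ via the lemma that every element of $\supp(\atom{x}^1_k)$ lies above $x$ and every element of $\supp(\atom{x}^0_k)$ lies below $x$ for $\leN$, proved by descending induction on~$k$. The support inclusion you use in the inductive step is exactly the observation that $(P-Q)_+ \leq P$ and $(P-Q)_- \leq Q$ for positive chains $P,Q$, which is immediate coefficient\nbd-wise. So there is nothing to compare: you have supplied the proof the paper merely points to.
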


\begin{proof}
  See Proposition~3.7 of~\cite{Steiner1}.
\end{proof}

\begin{paragr}\label{paragr:truncation_Cda}
 Let $K$ be an augmented directed complex.
 We shall call \emph{dimension} of $K$ the
 smallest integer $n\geq 0$ such that $K_i=0$ for all $i>n$.
 If such an $n$ does not exist, we will say that $K$ is of infinite dimension.
 For $n\geq 0$, we shall denote by $\nCda{n}$ the full subcategory of $\Cda$
 spanned by augmented directed complexes of dimension at most $n$ and we shall
 call its objects \emph{augmented directed $n$-complexes}.
 
 The embedding functor $\nCda{n} \hookrightarrow \Cda$ has a left adjoint
 that we shall denote by
   \[
    \ti{n} \colon \Cda \to \nCda{n}
  \]
 We name \emph{$n$-truncation} this functor. Explicitly, if $K$ is an augmented directed complex, we have
 \[
  \ti{n}(K)_i =
  \begin{cases}
   K_i & \mbox{if }i< n,\\
   K_n\slash d(K_{n+1}) & \mbox{if } i=n,\\
   0 & \mbox{if }i>n,
  \end{cases}
  \quad\mbox{and}\quad
  \ti{n}(K)^*_i =
  \begin{cases}
   K^*_i & \mbox{if } i<n,\\
   \overline{K^*_n} & \mbox{if }i=n,\\
   0 & \mbox{if }i>n,
  \end{cases}
 \]
 where $\overline{K^*_n}$ denotes the image of $K^*_i$ in $K_n\slash d(K_{n+1})$,
 and where the differentials and the augmentations are inherited from those of $K$
  in the obvious way.
\end{paragr}

\section{Truncated orientals for strict \pdfn-categories}

\begin{paragr}\label{paragr:truncation_ooCat}
	For any $n$ such that $1 \leqslant n \leqslant \infty$, we denote
	by $\nCat{n}$ the category of small strict $n$-categories.
	We shall consider an $n$-category $C$ as an $\infty$-category such that
	the set of $j$-cells $C_j$ only contains trivial cells for any $j>n$.
	We recall that for any positive integer $n$,
	the full inclusion $\nCat{n} \to \ooCat$ has a left adjoint, that we shall denote by
	\[
	\ti{n}\colon \ooCat \to \nCat{n}\,.
	\]
	We name \emph{$n$-truncation} this functor. Explicitly, if $C$ is an $\infty$-category, we have
	   \vskip-9pt
	\[
	  \ti{n}(C)_i = \begin{cases}
	    C_i & \text{if $i < n$,} \\
	    {C_n}\slash{\sim} & \text{if $i \geq n$,} \\
	  \end{cases}
	\]
	where $\sim$ is the smallest equivalent relation on $C_n$ verifying
	\[
	x \sim y \mbox{ if there exists a $(n+1)$\nbd-cell of $C$ from $x$ to $y$}
	\]
	and	where the compositions and identities are inherited by cells of $C$
	in the obvious way.
\end{paragr}

\begin{paragr}
  Let us fix an integer $n \geq 0$. If $C$ is an $n$-category,
  then the augmented directed complex~$\lambda(C)$ has at most
  dimension~$n$. Similarly, if $K$ is a augmented directed complex
  of dimension~$n$, it is immediate to see that $\nu(K)$ is an $n$-category.
  The functors~$\lambda$ and~$\nu$ induce functors
  \[
    \lambda \colon \nCda{n} \to \nCat{n}
    \quad\text{and}\quad
    \nu \colon \nCat{n} \to \nCda{n}
  \]
  and these form an adjoint pair. By definition, the squares
  \[
   \begin{tikzcd}
    \nCat{n} \ar[r] \ar[d, "\lambda"'] & \ooCat \ar[d, "\lambda"] \\
       \nCda{n} \ar[r] & \Cda
   \end{tikzcd}\qquad\mbox{and}\qquad
   \begin{tikzcd}
    \nCda{n} \ar[r] \ar[d, "\nu"'] & \Cda \ar[d, "\nu"] \\
       \nCat{n} \ar[r] & \ooCat
   \end{tikzcd}\mbox{,}
  \]
  where the horizontal arrows denote the inclusion functors, are commutative.
\end{paragr}

\begin{paragr}\label{prop:truncations}
	For any integer $n \geq 0$, consider the square
  \[
    \begin{tikzcd}
      \ooCat \ar[r, "{\ti{n}}"] \ar[d, "\lambda"'] & \nCat{n} \ar[d, "\lambda"] \\
      \Cda \ar[r, "{\ti{n}}"'] & \nCda{n} 
    \end{tikzcd} .
  \]
  For any small  $\infty$\nbd-category $C$,
	the abelian groups $(\ti{n}\lambda(C))_i$ and~$(\lambda \ti{n}(C))_i$
	are equal for~$i\neq n$, so let~$i=n$.
	The generating elements of~$(\ti{n}\lambda(C))_n$ are of the form
	$\overline{[x]}$, for~$x$ a $n$\nbd-cell of $C$, while the
	generating elements of~$(\lambda \ti{n}(C))_n$ are of the
	form~$[\bar{y}]$, for $\bar{y}$ a $n$\nbd-cell of $\ti{n}(C)$.
	The map sending the element $\overline{[x]}$ to $[\bar{x}]$ is well-defined and
	extends to an isomorphism $\ti{n}\lambda(C)_n \to \lambda \ti{n}(C)_n$ of abelian groups ,
	which is moreover compatible with the differential.
	Thus the square above is commutative up to this canonical natural isomorphism.
	
	Consider now the square
	 \[
	  \begin{tikzcd}
	  \Cda \ar[r, "{\ti{n}}"] \ar[d, "\nu"'] & \nCda{n} \ar[d, "\nu"] \\
	  \ooCat \ar[r, "{\ti{n}}"'] & \nCat{n} 
	  \end{tikzcd}
	 \]
	and let us give an explicit description of the canonical morphism
	\[
	 \ti{n}\nu(K) \to \nu\ti{n}(K)
	\]
	in the case where $K$ is a augmented directed complex with basis
	(cf.~Proposition~2.23 of~\cite{joint}).
	For any $i<n$ the canonical map from $(\ti{n}\nu(K))_i$ to $(\nu\ti{n}(K))_i$
	is the identity. An $n$\nbd-cell of $\ti{n}\nu(K)$
	is an equivalence class of an element
	\[
	 \tabld{x}{n}
	\]
	of $\nu(K)_n$ with respect to the equivalence relation
	identifying the previous element with another element of the form
	\[
	\tablnu{x}{n}{y}
	\]
	if and only if there exists $z$ in $K_{n+1}$ such that
	$d(z) = y^1_n - x^0_n$.
	Besides, an $n$\nbd-cell of $\nu\ti{n}(K)$
	is a matrix
	\[
	 \tablnu{x}{n}{t}
	\]
	where $x^\eps_i$ is in $K_i^\ast$, for all $0\leq i < n$ and $\eps = 0, 1$,
	and $t^0_n = t^1_n$ is an element of $\overline{K_n^\ast}$, \ie
	it is in the image of~$K_n^\ast$ in the group $(\ti{n}(K))_n = K_{n}/d(K_{n+1})$,
	satisfying the conditions listed in paragraph~\ref{paragr:atom}.
	The application sending the equivalence class of
	\[
	 \tabld{x}{n}
	\]
	to the $n$-cell
	\[
	 \tableq{x}{n}{x}
	\]
	is well-defined and it is a bijection from the $n$-cells of~$\ti{n}\nu(K)$
	to the $n$-cells of~$\nu\ti{n}(K)$, which is compatible
	with source, target, identity and compositions maps.	
\end{paragr}

\begin{paragr}
	Let $K$ be a Steiner complex and consider the commutative square
	\[
	 \begin{tikzcd}
	 \lambda\nu(K) \ar[rr, "\eps_K"] \ar[d] && K \ar[d]\\
	 \ti{n}\lambda\nu(K) \ar[rr, "\ti{n}(\eps_K)"'] && \ti{n}(K)
	 \end{tikzcd}\ .
	\]
	By virtue of Theorem~\ref{thm:equivalence_Steiner} the upper horizontal
	morphism $\eps_K$ is an isomorphism and therefore the same holds for the lower
	horizontal morphism $\ti{n}(\eps_K)$. Moreover, the previous paragraph
	gives us an isomorphism $\phi\colon \ti{n}\lambda\nu(K) \to \lambda\nu\ti{n}(K)$
	and we can consider the following triangle of directed augmented complexes
	\[
	 \begin{tikzcd}
	  \ti{n}\lambda\nu(K) \ar[rr, "\phi"] \ar[rd, "\ti{n}(\eps_K)"'] &&
	  \lambda\nu\ti{n}(K) \ar[ld, "\eps_{\ti{n}(K)}"] \\
	  & \ti{n}(K)
	 \end{tikzcd}\ .
	\]
	We leave the verification of the commutativity of the above triangle as an exercise to the reader
	and we remark that it can be formally deduced by an adjunction calculus.
	As an immediate consequence, we get that the counit morphism
	$\eps_{\ti{n}(K)}\colon \lambda\nu\ti{n}(K) \to \ti{n}(K)$
	of a truncated Steiner complex is an isomorphism.
	\end{paragr}

\begin{prop}\label{remark:tronqué_fully_faithful}
 The functor $\nu\colon \Cda \to \ooCat$ is fully faithful when restricted
 to the full subcategory spanned by strong Steiner complexes and their truncations.
\end{prop}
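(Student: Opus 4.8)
The plan is to reduce this to a purely formal property of adjunctions combined with the two invertibility statements already available. Write $\mathcal{S}$ for the full subcategory of $\Cda$ spanned by strong Steiner complexes and their truncations; the point is that $\mathcal{S}$ sits inside the full subcategory of $\Cda$ on which the counit $\eps\colon \lambda\nu \to 1_{\Cda}$ is invertible, and the right adjoint of an adjunction is automatically fully faithful when restricted to such a subcategory.

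First I would check that $\eps_M$ is an isomorphism for every object $M$ of $\mathcal{S}$. A strong Steiner complex is loop\nbd-free (by Steiner's proposition recalled above), hence an ordinary Steiner complex, so by Theorem~\ref{thm:equivalence_Steiner} the counit $\eps_K\colon \lambda\nu(K) \to K$ is an isomorphism. If instead $M = \ti{n}(K)$ for $K$ a strong, in particular ordinary, Steiner complex, then $\eps_M$ is an isomorphism by the assertion established in the paragraph immediately preceding this proposition — namely, that $\eps_{\ti{n}(K)}$ is an isomorphism for $K$ a Steiner complex, obtained from the commutative triangle relating $\ti{n}(\eps_K)$, the canonical isomorphism $\phi\colon \ti{n}\lambda\nu(K) \to \lambda\nu\ti{n}(K)$, and $\eps_{\ti{n}(K)}$.

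Next I would invoke the following piece of adjunction calculus: for the adjunction $\lambda \dashv \nu$ and any object $K$ of $\Cda$ such that $\eps_K$ is an isomorphism, the map
\[
\nu \colon \Hom_{\Cda}(K, M) \longrightarrow \Hom_{\ooCat}(\nu(K), \nu(M))
\]
is a bijection for every object $M$ of $\Cda$. Indeed, composing this map with the adjunction bijection $\Hom_{\ooCat}(\nu(K), \nu(M)) \cong \Hom_{\Cda}(\lambda\nu(K), M)$ turns it, by naturality of the counit, into precomposition by $\eps_K$, which is a bijection as soon as $\eps_K$ is an isomorphism. Taking $K$ and $M$ to range over the objects of $\mathcal{S}$, the first step guarantees the hypothesis is met, so $\nu$ induces a bijection on every hom\nbd-set of $\mathcal{S}$; that is, $\nu|_{\mathcal{S}}$ is fully faithful.

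I do not expect any genuine obstacle here: the only non\nbd-formal ingredient, invertibility of the counit on truncated Steiner complexes, has already been secured in the preceding paragraph, and the remainder is routine adjunction calculus. The only thing to be mildly careful about is bookkeeping — making sure that "strong Steiner and their truncations'' is genuinely covered by "Steiner'' and "truncations of Steiner complexes'', which is immediate from the strongly\nbd-loop\nbd-free $\Rightarrow$ loop\nbd-free implication.
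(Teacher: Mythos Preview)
Your argument is correct and is essentially the paper's proof spelled out in detail: the paper simply invokes the previous paragraph (counit invertible on truncations of Steiner complexes) together with Theorem~\ref{thm:equivalence_Steiner} (counit invertible on Steiner complexes), leaving the standard adjunction fact --- that invertibility of the counit at $K$ makes $\nu$ bijective on $\Hom_{\Cda}(K,-)$ --- implicit. Your explicit verification that the composite $\Hom_{\Cda}(K,M) \xrightarrow{\nu} \Hom_{\ooCat}(\nu K,\nu M) \cong \Hom_{\Cda}(\lambda\nu K, M)$ is precomposition by $\eps_K$ is exactly this routine step.
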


\begin{proof}
	This follows immediately from the previous paragraph joint with Theorem~\ref{thm:equivalence_Steiner}
\end{proof}

\section{Strict \pdfn-categories and augmented directed complexes model homotopy types}

For any $n$, $1\leq n\leq \infty$, we introduce here the cosimplicial objects for
augmented directed $n$\nbd-complexes and strict $n$\nbd-categories and we show
that the former satisfies condition~(e) if and only if the latter does.

\begin{paragr}\label{paragr:nerve_Cda}
	Following Example~3.8 of~\cite{Steiner1}
	and~\cite[§5]{e}, we can define a functor
	$\cC\colon \EnsSimp \to \Cda$ as follows.	
	For any simplicial set $X$ and any integer $p\geq 0$,
	the abelian group $(\cC X)_p$ of $p$-chains of the augmented directed complex $\cC X$
	is the free abelian group with basis the set of non-degenerated $p$-simplices
	of $X$, the submonoid $(\cC X)^*_p$ of positivity of $(\cC X)_p$ is generated
	by the elements of the basis;
	for any $p$-simplex $x$ in $X$, $p>0$, we set
	\[d(x) = \sum_{=0}^p (-1)^i d^i(x)\]
	for the differential, where $d^i$ is the $i$-th face map,
	with the convention that $d^i(x) = 0$ if $d^i(x)$ is degenerate,
	and for any $0$-simplex $y$ of $X$ we set $e(y) = 1$.
	One easily checks that this indeed defines an augmented directed complex
	with basis. 
	
	The functor $\cC \colon \EnsSimp \to \Cda$ commutes with colimits
	(see Proposition~5.8 of~\cite{e}) and so it has a right adjoint
	\begin{align*}
	 \nN \colon \Cda &\to \EnsSimp\\
	 K & \mapsto \bigl(\Deltan{p} \mapsto \Hom{\Cda}(\cC(\Deltan{p}), K)\bigr)
	\end{align*}
	that we shall call the \emph{Steiner nerve}.
	We shall still call $\cC \colon \cDelta \to \Cda$
	the cosimplicial object of $\Cda$ associated to this adjoint pair.
	
	For any positive integer $n$,
	we get a functor
	$\ti{n} \cC \colon \cDelta \to \nCda{n}$ and therefore obtain
	a pair of adjoint functors
	\[\cC_n \colon \EnsSimp \to \nCda{n} \quad\text{and}\quad \nN_n \colon \nCda{n} \to \EnsSimp.\]
	The functor $\nN_n$ is the restriction of the Steiner nerve to the full subcategory $\nCda{n}$.
	We shall also denote by $\nN_\infty$ the functor $\nN \colon \Cda \to \ooCat$.
\end{paragr}
	
\begin{paragr}
	For any $n$, $1\leqslant n \leqslant \infty$, we call \ndef{weak equivalences of $\nCda{n}$}
	the morphisms of augmented directed $n$\nbd-com\-plex\-es
	whose image by $\nN_n$ is a weak equivalence of $\EnsSimp$.
\end{paragr}

\begin{paragr}
	Consider the cosimplicial object
	$\mathcal O \colon \cDelta \to \ooCat$
	defined by
	\[\On{}(\Deltan p) = \On{p} = \nu\cC(\Deltan p),\quad p\geq 0,\]
	from which we can
	define a pair of adjoint functors
	\[c_\infty \colon \EnsSimp \to \ooCat \quad\text{and}\quad N_\infty \colon \ooCat \to \EnsSimp.\]
	This cosimplicial object is isomorphic to the one introduced in~\cite{Street} by Street
	and in fact the right adjoint $N_\infty$ is known as the \ndef{Street nerve}.
	For any positive integer $n$,
	we get a functor
	$\mathcal{O}^{\leqslant n} = \ti{n} \mathcal O \colon \cDelta \to \nCat{n}$ and therefore
	a pair of adjoint functors
	\[c_n \colon \EnsSimp \to \nCat{n} \quad\text{and}\quad N_n \colon \nCat{n} \to \EnsSimp.\]
	The functor $N_n$ is the restriction of the Street nerve to the full subcategory $\nCat{n}$
	and $c_n$ is the composition of the realisation functor $c_\infty$ followed by the truncation functor $\ti{n}$.
	When $n=1$, we get the usual nerve functor, that from now on we shall simply denote by $N$.
\end{paragr}

\begin{paragr}
	For any $n$, $1\leqslant n \leqslant \infty$, we define the \ndef{weak equivalences
	of $\nCat{n}$} as the morphisms whose image by $N_n$ is
	a weak equivalence of $\EnsSimp$.
	These are also called \ndef{Thomason weak equivalences} in~\cite{nThomason}.
\end{paragr}

\begin{paragr}\label{paragr:comparison_isomorphism}
	Let $E$ be a poset. For any $n$, $1\leq n\leq \infty$, we denote by $\Onm{E}{n}$ the $n$-category $\ti{n} \nu \cC N(E)$
	and we call it the \emph{$n$-truncated oriental associated to $E$} or the \emph{$n$-truncated
	oriental of $E$}. We refer to $\nu \cC N(E)$ as the \emph{oriental
	associated to $E$} or the \emph{oriental of $E$} and we denote it by $\On{E}$.
	
	We remark that, thanks to Proposition~7.5 of~\cite{e}, the following two triangles of functors
	\[
	 \begin{tikzcd}
	  \Cda \ar[rr, "\nu"] \ar[dr, "\nN"'] &&
	  \ooCat \ar[ld, "N_\infty"] \\ &\EnsSimp
	 \end{tikzcd}
	 \qquad
	 \begin{tikzcd}
	  & \EnsSimp \ar[dl, "\cC"'] \ar[rd, "c_\infty"] \\
	  \Cda && \ooCat \rar[ll, "\lambda"]
	 \end{tikzcd}
	\]
	are commutative (up to a canonical isomorphism).
	From the adjunction
	morphism $1_{\ooCat} \to \nu\lambda$ we get a functor
	\[
	 c_\infty N(E) \to \nu\lambda c_\infty N(E) = \nu \cC N(E) = \On{E},
	\]
	which is proven to be an isomorphism in Theorem~9.5 of~\cite{e}
	(notice that by paragraph~8.4 of~\loccit~$\kappa^*(E, \xi E)$ is indeed equal to $N(E)$,
	as already observed in Remark~\ref{remark:posets_as_simplicial_complexes}).
	With a straightforward calculation one gets that the triangle
	\[
	\begin{tikzcd}
	 &N(E) \ar[dl] \ar[dr]\\
	 N_\infty c_\infty N(E) \ar[rr] && N_\infty \On{E} \cong \nN\cC N (E)
	 \end{tikzcd}
	\]
	is commutative and the horizontal morphism is an isomorphism.
	Moreover, for any $n\geq 0$, the following square of functors
	\[
	 \begin{tikzcd}
	  \Cda \ar[r, "\nu"] \ar[d, "\ti{n}"'] &
	  \ooCat \ar[d, "\ti{n}"] \\
	  \nCda{n} \ar[r, "\nu"'] &
	  \nCat{n}
	 \end{tikzcd},
	\]
	is commutative up to a canonical isomorphism 
	when restricted to augmented directed complexes with basis (see Proposition~\ref{prop:truncations}).
	Thus we have an isomorphism $\ti{n} \nu \cC(E) \cong \nu \ti{n} \cC(E)$, which gives us another canonical isomorphism
	\[
	 \ti{n}c_\infty N(E) \to \ti{n}\nu\lambda c_\infty N(E) \cong \nu\ti{n} \cC N(E).
	\]
	The naturality of $\ti{n}$ and the commutativity of the above triangle provide us with the
	following commutative triangle
	\[
	 \begin{tikzcd}
	 &N(E) \ar[dl] \ar[dr]\\
	 N_\infty \ti{n} c_\infty N(E) \ar[rr] && N_\infty \Onm{E}{n} \cong \nN \ti{n} \cC N(E)
	 \end{tikzcd},
	\]
	where the horizontal morphism is an isomorphism.
	
	Thus, it is equivalent to show condition (e) for strict $n$\nbd-categories
	and for augmented directed $n$\nbd-complexes. This result is directly inspired
	and already essentially contained in the proof of Corollary~10.13 and in
	Remark~10.14 of~\cite{e}.
\end{paragr}

\begin{thm}
For any $n$ such that $1\leqslant n \leqslant \infty$,
the functor $N_n \colon \nCat{n} \to \EnsSimp$ induces
an equivalence at the level of the homotopy categories.
In particular, strict $n$-categories model homotopy types.
\end{thm}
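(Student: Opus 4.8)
The plan is to apply the Abstract Illusie--Quillen Theorem (Theorem~\ref{thm:illusie-quillen}) to the cocomplete category $\nCat{n}$ equipped with the cosimplicial object $\ti{n}\mathcal{O}\colon \cDelta \to \nCat{n}$, whose induced nerve functor is precisely $N_n$ and whose associated left adjoint is $c_n = \ti{n}c_\infty$. By that theorem it is enough to check that $\ti{n}\mathcal{O}$ satisfies condition~(e), that is, that for every poset $E$ the canonical morphism of simplicial sets $N(E) \to N_n c_n N(E)$ is a weak equivalence.

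Next I would transfer this verification to augmented directed complexes. Paragraph~\ref{paragr:comparison_isomorphism} provides, for every poset $E$ and every $1 \le n \le \infty$, a commutative triangle whose vertices are $N(E)$, $N_n c_n N(E) = N_\infty\ti{n}c_\infty N(E)$ and $\nN_n\cC_n N(E) = N_\infty \Onm{E}{n}$, and whose lower edge $N_n c_n N(E) \to \nN_n\cC_n N(E)$ is an isomorphism. By the 2\nbd-out\nbd-of\nbd-3 property for the weak equivalences of $\EnsSimp$, the unit $N(E) \to N_n c_n N(E)$ is a weak equivalence if and only if the unit $N(E) \to \nN_n\cC_n N(E)$ is one; equivalently, the cosimplicial object $\ti{n}\mathcal{O}$ satisfies condition~(e) if and only if the cosimplicial object $\cC_n\colon \cDelta \to \nCda{n}$ does.

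It therefore remains to establish condition~(e) for $\cC_n$, namely that for every poset $E$ the morphism $N(E) \to \nN_n\cC_n N(E)$ is a simplicial weak equivalence; this is the heart of the argument and the main obstacle. It was proved by Ara and Maltsiniotis (Theorem~9.5 and Corollary~10.11 of~\cite{e}, see also Remark~10.14 of~\loccit), and I will also give an independent, more simplicial proof in the next section, relying on the $n$\nbd-truncated orientals $\Onm{E}{n}$ and on the description of strong Steiner complexes and their truncations developed above. Granting condition~(e) for $\cC_n$, hence for $\ti{n}\mathcal{O}$, Theorem~\ref{thm:illusie-quillen} yields at once that $N_n$ induces an equivalence at the level of homotopy categories; since the homotopy category of $\EnsSimp$ is the homotopy category of homotopy types, strict $n$\nbd-categories model homotopy types.
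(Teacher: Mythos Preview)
Your proposal is correct and follows essentially the same approach as the paper: reduce to condition~(e) via Theorem~\ref{thm:illusie-quillen}, then invoke Corollary~10.11 of~\cite{e} (or the independent proof in the next section, Theorem~\ref{thm:e}). The transfer to augmented directed complexes via paragraph~\ref{paragr:comparison_isomorphism} that you spell out is slightly more explicit than the paper's one-line proof here, but it is exactly the mechanism the paper sets up and uses.
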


\begin{proof}
By Theorem~\ref{thm:illusie-quillen}, it is enough to show that the cosimplicial objects
$\On{}^{\leq n}$ of $\nCat{n}$ satisfy condition (e)
for any $n$, $1\leqslant n \leqslant \infty$.
This is precisely Corollary~10.11 of~\cite{e}, but we shall give an independent
proof in the next section (see Theorem~\ref{thm:e}).
\end{proof}

\begin{thm}
For any $n$ such that $1\leqslant n \leqslant \infty$,
the functor $\nN_n \colon \nCda{n} \to \EnsSimp$ induces an
equivalence at the level of the homotopy categories.
In particular, augmented directed $n$-complexes model homotopy types.
\end{thm}

\begin{proof}
 By Theorem~\ref{thm:illusie-quillen}, it is enough to show that
 the cosimplicial objects $\cC_n$ of $\nCat{n}$ satisfies condition~(e)
 for any $n$, $1\leqslant n \leqslant \infty$.
 This follows by Paragraph~\ref{paragr:comparison_isomorphism}
 and Corollary~10.11 of~\cite{e}, but we shall give an
 independent proof in the next section (see Theorem~\ref{thm:e}).
\end{proof}

\begin{remark}\label{remark:oonCat}
	For any $n$ and $k$, $0 \leq k \leq n \leq \infty$, consider the full subcategory
	$\nCat{(n, k)}$ of $\nCat{n}$ spanned by the strict $n$\nbd-categories whose
	$i$\nbd-cells are all (strictly) invertible for $i > k$.
	The embedding functor $\nCat{(n, k)} \to \nCat{n}$ has a left adjoint,
	that we will denote by $L_k$, so that we can consider a cosimplicial
	object
	\[ L_k \,\On{}^{\leq n} \colon \cDelta \to \nCat{(n, k)}. \]
	For $k \ge 1$, this functor also satisfies condition (e) and we will sketch a
	proof of this result at the end of the next section (see Remark~\ref{remark:oonCat_proof}).
\end{remark}

\section{The simplicial homotopy}

\begin{paragr}
The aim of this section is to establish condition (e)
for the cosimplicial object $\cC_n \colon \Delta \to \nCda{n}$
of augmented directed $n$\nbd-complexes
and for the cosimplicial object
$\Onm{}{n}\colon \Delta \to \nCat{n}$
of strict $n$-categories, for $1 \leq n \leq \infty$.
More explicitly, we shall show that the canonical morphisms
$N(E) \to \nN_n\cC_n N(E)$ and $N(E) \to N_n c_n N(E)$ are
simplicial weak equivalences, for any $1\leq n \leq \infty$.
As the computations of paragraph~\ref{paragr:comparison_isomorphism}
show, the former is a simplicial weak equivalence if and only if
the latter is so.
\end{paragr}

\emph{We fix for this section a poset $E$.}

\begin{paragr}\label{paragr:chains_oriental_poset}
	The augmented directed complex $\cC N(E)$ can be described as follows (cf.~paragraph~\ref{paragr:nerve_Cda}).
	For $p\geq 0$, the non-degenerated $p$-simplices of $N(E)$ are strictly
	increasing maps $\Deltan{p} \to E$.
	Thus, for $p\geq 0$ the elements of $(\cC N(E))_p$ (resp. $(\cC N(E))^*_p$) can be
	identified with the abelian group (resp. abelian monoid) freely generated
	by the family of $(p+1)$-tuples of the form
	\[
	 (i_0, i_1, \dots, i_p)\,,\quad i_0 < i_1 < \dots < i_p\,.
	\]
	The differential is defined by
	\[
	 d(i_0, i_1,\dots, i_p) = \sum_{k=0}^p (-1)^k (i_0, i_1, \dots, \widehat{i_k}, \dots, i_p)\,,\quad p>0\,,
	\]
	where $(i_0, \dots, \widehat{i_k}, \dots, i_p) = (i_0, \dots, i_{k-1}, i_{k+1}, \dots, i_p)$,
	and the augmentation by $e(i_0) = 1$. Theorem~8.6 of~\cite{e} shows that $\cC N(E)$ is
	a strong Steiner complex.
\end{paragr}

\emph{For the remaining of the section, we fix an $n$ such that $1 \le n \le \infty$
and we denote the identity functor of $\ooCat$ by $\ti{\infty}$ in order to give a unified treatment.}

\begin{paragr}\label{paragr:juxtaposition}
	Let $p$ be an integer, $p\geq 0$ and consider two
	morphisms $f$ and $g$ from $\cC\Deltan{p}$ to
	$\ti{n}\cC N(E)$ such that $f(i)\leq g(i)$ for
	any $i = 0, \dots, p$. Fix an element $(j_0, \dots, j_m)$
	of the basis of $\cC \Deltan{p}$ with $m\ge 0$, an integer $\ell$,
	$0\leq \ell \leq m-1$. If $m\le n$, consider
	\begin{align*}
	 f(j_0, \dots, j_\ell) = \sum_{i \in I} (y_{0, i}, \dots, y_{\ell, i})
	 &&\mathrm{and}&&
	 g(j_{\ell+1}, \dots, j_m) = \sum_{k \in K} (y_{\ell+1, k}, \dots, y_{m , k})
	\end{align*}
	be the unique forms as sums of elements of the canonical basis of the free abelian groups
	$\cC N(E)_\ell$ and $\cC N(E)_{m-\ell-1}$ and set
	\begin{align*}
	 H_\ell(j_0, \dots, j_m) &= f(j_0, \dots, j_\ell)g(j_{\ell+1}, \dots, j_m)\\
	 	&= \sum_{(i, k) \in I \times K}\overline{(y_{0, i}, \dots, y_{\ell, i}, y_{\ell+1, k}, \dots, y_{m, k})} .
	\end{align*}
	If $m> n$, then we set $H_\ell(j_0, \dots, j_m) = 0$.
	In order for the above sum to be well-defined, we have to check that $y_{\ell, i} \leq y_{\ell+1, k}$
	for all $i$ in $I$ and $k$ in $K$. The following lemma immediately implies the inequalities
	\[
	 y_{\ell, i} \leq f(j_\ell) \leq g(j_{\ell + 1}) \leq y_{\ell+1, k}.
	\]
	\end{paragr}
	
	\begin{lemma}\label{lemma:juxtaposition_well-defined}
		Let $p$ be an integer, $p\geq 0$, and consider a morphism $f$ from $\cC \Deltan{p}$ to
		$\cC N(E)$. Then, any element $(y_0, y_1, \dots, y_p)$ of the basis of $\cC N(E)$
		appearing in the support of $f(0, 1, \dots, p)$ is such that $f(0) \leq y_0$ and $y_p \leq f(p)$.
	\end{lemma}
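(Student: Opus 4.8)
Write a remark on the statement first: $f(0)$ and $f(p)$ denote the images of the basis elements $(0),(p)$ of $\cC\Deltan p$, and these are positive $0$\nbd-chains of $\cC N(E)$ of augmentation $1$, hence single vertices of $E$, so that the two inequalities are to be read in the poset $E$. The plan is an induction on $p$, resting on the following \emph{positivity fact}: for every poset $E'$ and every $q\ge 1$, the only positive $q$\nbd-cycle of $\cC N(E')$ is $0$. I would prove this by induction on $q$. Given a positive $q$\nbd-cycle $t\ne 0$, choose a vertex $z$ minimal in $E'$ among those occurring in $\supp(t)$; a simplex of $\supp(t)$ containing $z$ has it as initial vertex, so $t=t'+z\star s$, where $t'$ collects the simplices avoiding $z$, where $s$ is a positive $(q-1)$\nbd-chain all of whose simplices have vertices $>_{E'}z$, and where $z\star(-)$ prepends $z$ to each simplex of a chain supported away from $z$. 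From $d(z\star\rho)=\rho-z\star d\rho$, the relation $dt=0$ splits, according to whether a simplex contains $z$ or not, into $dt'+s=0$ and $z\star ds=0$; injectivity of $z\star(-)$ gives $ds=0$. If $q\ge 2$ then $s$ is a positive $(q-1)$\nbd-cycle, hence $0$ by induction; if $q=1$ then $s=-dt'$ has augmentation $0$ and, being positive, vanishes. Either way $t=t'$ avoids $z$, which is absurd, so $t=0$.

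For the lemma itself, the case $p=0$ is immediate, and the case $p=1$ follows from a variant of the argument below in which the trivial statement ``a positive $0$\nbd-chain of augmentation $0$ is zero'' replaces the positivity fact; it shows that $f(0,1)$ is supported on edges all of whose vertices lie between $f(0)$ and $f(1)$, whence in particular $f(0)\le_E f(1)$. Assume now $p\ge 2$ and the lemma for smaller values; I treat the inequality $f(0)\le_E y_0$, the one for $y_p$ being symmetric (exchange the up-set $E_{\ge f(0)}$ below for the down-set $E_{\le f(p)}$ and ``initial'' for ``terminal''). Put $v:=f(0)$ and $U:=\{u\in E:u\ge_E v\}$, an up-set. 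Precomposing $f$ with the coface maps $\cC\Deltan{p-1}\to\cC\Deltan p$ and applying the inductive hypothesis — using also $f(1)\ge_E v$, which the case $p=1$ provides, for the face omitting the first vertex — shows that $f$ sends every codimension\nbd-one face of $(0,\dots,p)$ to a chain whose simplices have initial vertex in $U$, hence into $\cC N(U)_{p-1}$; therefore $d(f(0,\dots,p))\in\cC N(U)_{p-1}$. Write $c:=f(0,\dots,p)=c_U+c'$, with $c_U$ (resp. $c'$) the part supported on simplices with all vertices in $U$ (resp. with some vertex outside $U$); both are positive, and, $U$ being an up-set, every simplex of $\supp(c')$ has initial vertex outside $U$. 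Grade the quotient chain complex $Q:=\cC N(E)/\cC N(U)$ by the initial vertex of a simplex; using $d(a\star\rho)=\rho-a\star d\rho$ and the up-set property, the induced differential $\bar d$ sends the summand indexed by a vertex $a'$ into the sum of the summands indexed by vertices $\ge_E a'$. If $c'\ne 0$, pick an initial vertex $a$ of $c'$ minimal for $\le_E$ among those occurring; projecting $\bar d\,\overline{c'}=0$ onto the degree\nbd-$(p-1)$ part of $Q$ indexed by $a$, only the simplices $a\star\tau$ of $c'$ contribute, and one obtains $\overline{a\star dt}=0$ in $Q$, where $t:=\sum m_{a\star\tau}\,\tau$ is a positive $(p-1)$\nbd-chain. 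Since $a\notin U$, the chain $a\star dt$ lies in $\cC N(U)$ only if it is $0$, so $dt=0$; as $t$ is positive, nonzero and of degree $p-1\ge 1$, this contradicts the positivity fact. Hence $c'=0$, so every $(y_0,\dots,y_p)$ in $\supp(f(0,\dots,p))$ has all of its vertices in $U=E_{\ge f(0)}$, and in particular $f(0)\le_E y_0$.

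The main obstacle is this last reduction — extracting a genuine lower\nbd-degree positive cycle from the ``relative cycle'' $d(f(0,\dots,p))$ modulo $\cC N(U)$ — together with the positivity fact that kills it. The two points needing care are the verification that the initial\nbd-vertex grading of $Q$ makes $\bar d$ triangular (this is exactly where $U$ being an up-set and the prepending identity $d(a\star\rho)=\rho-a\star d\rho$ are used) and the bookkeeping of the faces via the inductive hypothesis; everything else is routine.
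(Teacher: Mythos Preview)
Your proof is correct and takes a genuinely different, more elementary route than the paper's. The paper's argument is \oo categorical in spirit and leans on several results from~\cite{e}: it computes the $1$\nbd-source and $1$\nbd-target of the atom $\atom{(0,\dots,p)}$ (Lemma~8.7 of~\loccit), uses the description of $1$\nbd-cells of $\On{E}$ as paths (paragraph~10.3 of~\loccit), and then invokes Proposition~10.6 of~\loccit to localise the entire $p$\nbd-cell $\nu(f)(\atom{(0,\dots,p)})$ to $\On{F}$, where $F$ is the linearly ordered subposet of $E$ swept out by the paths $f(i,i{+}1)$; the conclusion then drops out of the explicit description of $\cC N(F)$. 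Your argument, by contrast, stays entirely at the chain level: the two ingredients are the positivity fact (no nonzero positive cycle in $\cC N(E')$) and the upper\nbd-triangular shape of the induced differential on $Q=\cC N(E)/\cC N(U)$ with respect to the initial\nbd-vertex grading, which together force $c'=0$. The paper's approach is shorter once the cited machinery is available and actually yields the sharper localisation to a totally ordered $F$; yours is self\nbd-contained, avoids any \oo categorical input, and incidentally proves a bit more than the lemma states (every vertex of every simplex in the support lies between $f(0)$ and $f(p)$, not just $y_0$ and $y_p$). One small remark: the identity $d(z\star\rho)=\rho-z\star d\rho$ is only literally valid for $|\rho|\ge 1$; in the bottom cases ($q=1$ of the positivity fact and the $p=1$ base case) the augmentation plays the role of $d$, which you do use implicitly but might want to make explicit.
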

	
	\begin{proof}
		By virtue of Lemma~8.7 of~\cite{e} we have that
		\[
		 \atom{(0, 1, \dots, p)}^0_1 = (0, p)\quad\text{and}\quad \atom{(0, 1, \dots, p)}^1_1 = (0, 1) + (1, 2) + \dots + (p-1, p)
		\]
		and by the linearity of $f$ we get
		\[
		f\bigl(\atom{(0, 1, \dots, p)}^0_1\bigr) = f(0, p)
		\]
		and
		\[
		f\bigl(\atom{(0, 1, \dots, p)}^1_1\bigr) = f(0, 1) + f(1, 2) + \dots + f(p-1, p).
		\]
		For any $0\leq i < p$, the chain $f(i, i+1)$ of $\cC N(E)$ is simply a sum
		\[
		 f(i, i+1)  = (x_{0, i}, x_{1, i}) + (x_{1, i}, x_{2, i}) + \dots + (x_{n_i-1, i}, x_{n_i, i})
		\]
		of $1$\nbd-chains of $\cC N(E)$, where $x_{0, i} = f(i)$ and $x_{n_i, i} = f(i+1)$ (cf.~paragraph~10.3 of~\cite{e}). Similarly,
		\[
		f(0, p) = (x_0, x_1) + (x_1, x_2) + \dots + (x_{m-1}, x_m),
		\]
		where $x_0 = f(0)$ and $x_m = f(p)$.
		
		Setting $S' = f(0, p)$ and
		$S = f(0, 1) + \dots + f(p-1, p)$, Proposition~10.6 of~\cite{e} states that
		\[
		 \Hom_{\On{F}}(S', S) = \Hom_{\On{E}} (S', S), 
		\]
		where $F$ is the linearly ordered subset of $E$ with elements the
		$x_{k, i}$, for all $0\leq i < p$ and $0\leq k \leq n_i$.
		Hence any element $(y_0, y_1, \dots, y_p)$
		appearing in the support of $f(0, 1, \dots, p)$
		is a $p$\nbd-chain of the augmented directed complex $\cC N(F)$.
		The explicit description of the chains of $\cC N(F)$ given in paragraph~\ref{paragr:chains_oriental_poset}
		immediately implies that $f(0) \leq y_0$ and $y_p \leq f(p)$.
	\end{proof}
	
	\begin{paragr}\label{paragr:juxtaposition2}
		With the same notations as in paragraph~\ref{paragr:juxtaposition},
	notice that we have the following relation
	\begin{align*}
	 d\bigl(H_\ell(j_0, \dots, j_m)\bigr) &= \sum_{i, k} d(y_{0, i}, \dots, y_{\ell, i}, y_{\ell+1, k}, \dots, y_{m, k}) \\
	 &= \sum_{i, k} \sum_{s=0}^\ell (-1)^s (y_{0, i}, \dots, \widehat{y_{s, i}}, \dots, y_{\ell, i}, y_{\ell+1, k}, \dots, y_{m, k}) \\
	 &\phantom{=} + \sum_{i, k} \sum_{s=\ell + 1}^m (-1)^s (y_{0, i}, \dots, y_{\ell, i}, y_{\ell+1, k}, \dots, \widehat{y_{s, k}}, \dots, y_{m, k})\\
	 &= d\bigl(f(j_0, \dots, j_\ell)\bigr)g(j_{\ell +1}, \dots, j_m) \\
	 &\phantom{=} + (-1)^{\ell + 1} f(j_0, \dots, j_\ell)d\bigl(g(j_{\ell +1}, \dots , m)\bigr) \\
	 &= d\bigl(H_\ell(j_0, \dots, j_\ell)\bigr)H_\ell(j_{\ell + 1}, \dots, j_m)\\
	 &\phantom{=} + (-1)^{\ell+1} H_\ell(j_0, \dots, j_\ell)d\bigl(H_\ell(j_{\ell +1}, \dots, j_m)\bigr) .
	\end{align*}
\end{paragr}

\begin{paragr}\label{paragr:definition_homotopy}
	Let $f$ and $g$ be two endomorphisms of $N_\infty(\Onm{E}{n})$ and suppose that
	$f(i) \leq g(i)$ for any $i$ in $E$, \ie for any 0-simplex of $N_\infty(\Onm{E}{n})$.
	Let $p\geq 0$, let $\phi\colon \Deltan{p} \to \Deltan{1}$ be a morphism and let $x \colon \On{p} \to \Onm{E}{n}$
	be an \oo-functor. By virtue of Remark~\ref{remark:tronqué_fully_faithful}, we define a \oo-functor
	\[
	 H(\phi, x) \colon \On{p} \to \Onm{E}{n}
	\]
	by means of a morphism of augmented directed complexes from $\cC\Deltan{p}$ to $\ti{n}\cC  N (E)$,
	that we shall still call $H(\phi, x)$. 
	For any element $(j_0, \dots, j_m)$ of the basis of
	$\cC\Deltan{p}$ there exists a unique integer $k_\phi$
	such that $-1 \leq k_\phi \leq m$ and such that
	\[
	 \phi(j_k) = \begin{cases}
	              0, & \mbox{if } 0\leq k \leq k_\phi,\\
	              1, & \mbox{if } k_\phi +1 \leq k \leq m.
	             \end{cases} 
	\]
	If $m > n$, we have that $H(\phi, x)(j_0, \dots, j_m)$ must be zero.
	If $m \le n$ we set
	\[
	 H(\phi, x)(j_0, \dots, j_m) = \bigl(\lambda(f(x))(j_0, \dots, j_{k_\phi})\bigr)\bigl(\lambda(g(x))(j_{k_\phi+1}, \dots, j_m)\bigr)
	\]
	and we notice that
	\[ \lambda(f(x))(j_{k_\phi}) = f(x_{j_{k_\phi}}) \le g(x_{j_{k_\phi}}) \le g(x_{j_{k_\phi+1}}) = \lambda(g(x))(j_{k_\phi+1}) \]
	so that by Lemma~\ref{lemma:juxtaposition_well-defined} $H(\phi, x)(j_0, \dots, j_m)$ is a
	well-defined function from the basis of $\cC \Deltan{p}$ to $\ti{n}\cC  N(E)$
	(cf.~also paragraph~\ref{paragr:juxtaposition}).
	We also remark that the only cases in which we have to deal with
	elements of $(\ti{n}\cC  N(E))_n$ without a prescribed choice for
	the equivalent class are precisely when $m=n$ and $k_\phi = -1$ or $k_\phi = n$,
	for which we get respectively
	\[
	 H(\phi, x)(j_0, \dots, j_n) = \lambda(g(x))(j_0, \dots, j_n)
	\]
	and
	\[
	 H(\phi, x)(j_0, \dots, j_n) = \lambda(f(x))(j_0, \dots, j_n)\,.
	\]
\end{paragr}

\begin{paragr}
	We have to check that $H(\phi, x)$ is indeed a morphism of augmented directed complexes.
	The compatibility with the augmentation and the positivity submonoids is clear.
	Let us verify the compatibility with the differential.
	Let $(j_0, \dots, j_m)$ be an element of the basis of $\cC\Deltan{p}$. If $m > n$ or
	 $m = n$ and $k_\phi = -1$ or $k_\phi= n$ the verification is trivial.
	In the other cases we have
	\begin{align*}
		H(\phi, x)(d(j_0, \dots, j_m))
		&= H(\phi, x)\bigl(d(j_0, \dots, j_{k_\phi})(j_{k_\phi+1}, \dots, j_m)\bigr)\\
		 &\phantom{= } +(-1)^{k_\phi+1} H(\phi, x)\bigl((j_0, \dots, j_{k_\phi})d(j_{k_\phi+1}, \dots, j_m)\bigr) \\
		&= \Bigl(\lambda(f(x))\bigl(d(j_0, \dots, j_{k_\phi})\bigr)\Bigr)\Bigl( \lambda(g(x))(j_{k_\phi+1}, \dots, j_m)\Bigr)\\
		 &\phantom{=} + (-1)^{k_\phi+1} \Bigl(\lambda(f(x))(j_0, \dots, j_{k_\phi})\Bigr)\Bigl( \lambda(g(x))\big(d(j_{k_\phi+1}, \dots, j_m)\bigr)\Bigr) \\
		&= \Bigl(d\bigl(\lambda(f(x))(j_0, \dots, j_{k_\phi})\bigr)\Bigr)\Bigl(\lambda(g(x))(j_{k_\phi+1}, \dots, j_m)\Bigr)\\
		 &\phantom{=} + (-1)^{k_\phi+1} \Bigl(\lambda(f(x))(j_0, \dots, j_{k_\phi})\Bigr)\Bigl( d\bigl(\lambda(g(x))(j_{k_\phi+1}, \dots, j_m)\bigr)\Bigr) \\
		&= \Bigl(d\bigl(H(\phi, x)(j_0, \dots, j_{k_\phi})\bigr)\Bigr)\bigl(H(\phi, x)(j_{k_\phi+1}, \dots, j_m)\bigr)\\
		 &\phantom{=} +(-1)^{k_\phi+1} \bigl(H(\phi, x)(j_0\, \dots, j_{k_\phi})\bigr)\Bigl(d\bigl(H(\phi, x)(j_{k_\phi+1}, \dots, j_m)\bigr)\Bigr) \\
		&= d\bigl(H(\phi, x)(j_0, \dots, j_m)\bigr) ,
	\end{align*}
	where the last equality follows from paragraph~\ref{paragr:juxtaposition2}.
\end{paragr}

\begin{paragr}
	We have to check that $H$ is a simplicial homotopy from $f$ to $g$,
	that is to say a morphism of simplicial sets $\Deltan{1} \times N_\infty(\Onm{E}{n}) \to N_\infty(\Onm{E}{n})$
	such that the restriction to $\{0\} \times N_\infty(\Onm{E}{n})$
	(resp. the restriction to~$\{1\}\times N_\infty(\Onm{E}{n})$)
	gives~$f$ (resp. gives~$g$).
	For any \oo-functor $x \colon \On{p} \to \Onm{E}{n}$,
	the restriction to $\{0\} \times N_\infty(\Onm{E}{n})$ of $H$ gives
	\[
	 H(\phi, x) = \lambda(f(x)) 
	\]
	by definition, where $\phi$ is the unique morphism $\Delta_p \to \Delta_0$,
	and similarly for the restriction to $\{1\}\times N_\infty(\Onm{E}{n})$.
	
	We have to verify that for any
	morphisms $\psi\colon \Deltan{p'} \to \Deltan{p}$ and $\phi\colon \Deltan{p} \to \Deltan{1}$
	and any \oo-functor $x \colon \On{p} \to \Onm{E}{n}$, the relation
	\[
	 H(\phi, x) \On{\psi} = H(\phi\psi, x\On{\psi})
	\]
	holds. We shall show this equality using augmented directed complexes.
	
	Let $(j_0, \dots, j_m)$ be an element of the basis of $\cC\Deltan{p'}$
	and let us set $k' = k_{\phi\psi}$. If $m> n$ the equality above holds trivially as well as
	the cases $m=n$ and $k'=-1$ or $k'= n$.
	In the other cases we have
	\begin{align*}
		\bigl(H(\phi, x) \cC\psi\bigr)(j_0, \dots, j_m)
		&= H(\phi, x)\bigl(\psi(j_0), \dots, \psi(j_m)\bigr) \\
		&= \bigl(\lambda(f(x))(\psi(j_0), \dots, \psi(j_{k'})\bigr)
		\bigl(\lambda(g(x))(\psi(j_{k'+1}), \dots, \psi(j_m))\bigr)\\
		&=  \bigl(\lambda(f(x))\cC\psi(j_0, \dots, j_{k'})\bigr)
		\bigl(\lambda(g(x))\cC\psi(j_{k'+1}, \dots, j_m)\bigr)\\
		&= H(\phi\psi, x\On{\psi})(j_0, \dots, j_m)\ ,
	\end{align*}
	where the last equality holds since  we have the relation
	$f(x\On{\psi}) = f(x)\On{\psi}$ by naturality of $f$
	and the functoriality of $\lambda$ gives
	$\lambda(f(x)\On{\psi}) = \lambda(f(x))\cC\psi$.

\end{paragr}

\begin{thm}\label{thm:e}
	For any $n$ such that $1\leqslant n \leqslant \infty$,
	the cosimplicial objects
	\[
	\cC_n\colon \cDelta \to \nCda{n}
	\quad\text{and}\quad
	c_n\colon \cDelta \to \nCat{n}
	\]
	both satisfy condition~(e).
\end{thm}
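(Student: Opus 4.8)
The plan is to reduce condition~(e) for both cosimplicial objects to a single homotopical statement and then to settle it with the simplicial homotopy constructed above. By paragraph~\ref{paragr:comparison_isomorphism} the canonical morphism $N(E)\to\nN_n\cC_n N(E)$ is a weak equivalence if and only if the canonical morphism $\iota_E\colon N(E)\to N_\infty(\Onm{E}{n})$ is one, and, via the isomorphism $c_n N(E)\cong\Onm{E}{n}$ of \loccit, the latter is exactly the unit of the adjunction $c_n\dashv N_n$ evaluated at $N(E)$. So it suffices to prove that $\iota_E$ is a simplicial weak equivalence for every poset $E$; this yields condition~(e) for $c_n$, hence, again by paragraph~\ref{paragr:comparison_isomorphism}, also for $\cC_n$.

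First I would realise $\iota_E$ as half of a simplicial homotopy equivalence. Let $\epsilon$ denote the counit of $c_n\dashv N_n$ and let $\pi_E$ be the composite of the isomorphism $N_\infty(\Onm{E}{n})\cong N_n c_n N(E)$ with $N_n(\epsilon_E)\colon N_n c_n N(E)\to N_n(E)=N(E)$. The triangle identity gives $\pi_E\iota_E=1_{N(E)}$ on the nose. The one thing to keep an eye on is that the $0$-cells of $\On{E}$, and hence of $\Onm{E}{n}$ for every $1\le n\le\infty$, are precisely the elements of $E$ — a positive $0$-chain of $\cC N(E)$ with augmentation $1$ is a single basis element — from which one reads off that both $\iota_E$ and $\pi_E$ are the identity on $0$-simplices; in particular $\iota_E\pi_E$ agrees with $1_{N_\infty(\Onm{E}{n})}$ on $0$-simplices.

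Next I would apply the construction of paragraph~\ref{paragr:definition_homotopy} to $f=\iota_E\pi_E$ and $g=1_{N_\infty(\Onm{E}{n})}$. These are endomorphisms of $N_\infty(\Onm{E}{n})$ with $f(i)=i=g(i)$, so in particular $f(i)\le g(i)$, for every $0$-simplex $i$, which is exactly the hypothesis required to run that construction; its well-definedness further relies on Lemma~\ref{lemma:juxtaposition_well-defined} and on the full faithfulness of $\nu$ on Steiner complexes and their truncations (Proposition~\ref{remark:tronqué_fully_faithful}), both of which are already in hand. The output is a simplicial homotopy
\[
 H\colon \Deltan{1}\times N_\infty(\Onm{E}{n})\longrightarrow N_\infty(\Onm{E}{n})
\]
from $\iota_E\pi_E$ to $1_{N_\infty(\Onm{E}{n})}$. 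Combined with $\pi_E\iota_E=1_{N(E)}$, this shows that $\iota_E$ is a simplicial homotopy equivalence with inverse $\pi_E$, hence a weak equivalence of $\EnsSimp$. Since $E$ was arbitrary, $c_n$ — and therefore also $\cC_n$ — satisfies condition~(e).

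The delicate point of the argument is not any computation but pinning down the two ends of the homotopy: one must make sure that $\iota_E$ and $\pi_E$ genuinely restrict to the identity on $0$-simplices, so that the hypothesis ``$f(i)\le g(i)$ for every $0$-simplex $i$'' that unlocks paragraph~\ref{paragr:definition_homotopy} holds. Everything downstream — that $H$ is a well-defined morphism of simplicial sets, that it is compatible with the simplicial operators, and that it restricts to $f$ and $g$ over the two vertices of $\Deltan{1}$ — has already been verified in the paragraphs preceding the statement and may be taken for granted.
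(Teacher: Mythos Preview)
Your proposal is correct and follows essentially the same approach as the paper: the map $\iota_E$ (the paper's $\etan{E}{n}$) is split by $\pi_E$ (the paper's $N_\infty(\epsn{E}{n})$), the composite $\iota_E\pi_E$ and the identity agree on $0$\nbd-simplices, and the simplicial homotopy of paragraph~\ref{paragr:definition_homotopy} then exhibits $\iota_E$ as a deformation retract inclusion. The only differences are cosmetic---you phrase $\pi_E$ via the counit of $c_n\dashv N_n$ rather than via the explicit $\infty$\nbd-functor $\epsn{E}{n}$, and you spell out the identification of $0$\nbd-cells of $\Onm{E}{n}$ with elements of $E$---but the argument is the same.
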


\begin{proof}
      In order to show that the morphism $\etan{E}{n}\colon N(E) \to N_\infty(\Onm{E}{n})$
      is a simplicial weak equivalence for any $n$ such that $1\le n\le \infty$,
      and thus also the morphism
      $N(E) \to N_nc_nN(E)$ is so by paragraph~\ref{paragr:comparison_isomorphism},
      we consider the canonical \oo-functor $\epsn{E}{n}\colon \Onm{E}{n} \to E$.
      One immediately checks that the composition $N_\infty(\epsn{E}{n}) \etan{E}{n}$
      is the identity on $N(E)$; indeed, for any $p\ge 0$ and any functor
      $x\colon\Deltan{p} \to E$ and \oo-functor $y\colon \On{p} \to \Onm{E}{n}$, we have
      \[
       \etan{E}{n} (x) = \mathcal O^{\le n}(x)
       \quad\text{and}\quad
       \epsn{E}{n}\circ y = \{y(0) \le y(1) \le \dots \le y(p)\}\,.
      \]
      Furthermore, for any object $i$ of $E$, \ie for any $0$-simplex of
      $\Onm{E}{n}$, we have
      \[
       \etan{E}{n} N_\infty(\epsn{E}{n})(i) = \etan{E}{n}(i) = i
      \]
      and therefore we can set $f = \etan{E}{n} N_\infty(\epsn{E}{n})$
      and $g = \id{\Onm{E}{n}}$ and obtain a simplicial homotopy
      $H$ from $f$ to $g$, as defined in paragraph~\ref{paragr:definition_homotopy}.
      Finally, the morphism $N_\infty(\epsn{E}{n})$ is a deformation retract and hence
      it is a simplicial weak homotopy equivalence.
\end{proof}

\begin{remark}\label{remark:oonCat_proof}
	One can show that strict $(n, k)$\nbd-categories model homotopy types,
	for all $1\leq k\leq n \leq \infty$.
	Indeed, the cosimplicial object $L_k \Onm{}{n} \colon \cDelta \to \nCat{(n, k)}$
	introduced in Remark~\ref{remark:oonCat} satisfies condition~(e).
	We sketch the argument of the proof, which is
	inspired by the strategy used in~\cite{e} to establish condition~(e) for strict $n$\nbd-categories.
	
	We want to show that the \oo-functor $\epsilon_E\colon L_k \Onm{E}{n} \to E$ is a weak equivalence
	for any poset~$E$.
	One checks that the functor $L_k$ from $\nCat{n}$ to $\nCat{(n, k)}$ is the identity
	on objects and $1$\nbd-cells.
	In particular, the functor $\epsilon_E$ is a bijection on objects (see~\ref{paragr:chains_oriental_poset})
	and we can therefore apply Theorem~7.10 of~\cite{e}, which states that in order
	to prove that $\epsilon_E$ is a weak equivalence, it suffices to verify
	that for any element $x$ and $y$ of $E$, the \oo-functor
	\[
	 \Hom_{L_k \Onm{E}{n}}(x, y) \to \Hom_{E}(x, y)
	\]
	is a weak equivalence. As the functor $L_k$ is the identity on the $1$\nbd-cells,
	both the source and the target of this \oo-functor are empty if $x \not \leq y$.
	Suppose that $x \leq y$. Since $E$ is a poset, we have that $\Hom_{E}(x, y)$
	is the terminal \oo-category. We are left to show that $\Hom_{L_k \Onm{E}{n}}(x, y)$
	is weakly contractible.
	
	We define $F$ to be the subposet of~$E$ whose elements are the $z$ in $E$ such that
	$x \leq z \leq y$. We have $\Hom_{\On{F}}(x, y) = \Hom_{\On{E}}(x, y)$.
	One can check that the functor $L_k\ti{n}$ commutes with finite products for any $0\leq k\leq n\leq \infty$.
	This implies that for any~$k$ and~$n$ such that $1\leq k\leq n\leq \infty$,
	any \oo-category~$A$ and any pair of objects~$a$ and~$b$ of~$A$,
	we have a canonical isomorphism
	\[
	 L_{k-1}\ti{n-1}\Hom_A(a, b) \cong \Hom_{L_k\ti{n}A}(a, b).
	\]
	In particular, the following relation holds
	\begin{equation*}
	 \begin{split}
	 \Hom_{L_k \Onm{E}{n}}(x, y) &\cong L_{k-1}\ti{n-1}\Hom_{\On{E}}(x, y)\\
	 &= L_{k-1}\ti{n-1}\Hom_{\On{F}}(x, y)\\ &\cong \Hom_{L_k \Onm{F}{n}}(x, y).
	 \end{split}
	\end{equation*}
	Therefore, it is enough to show that for any poset~$F$ with smallest element~$x$,
	the \oo-category $\Hom_{L_k \Onm{F}{n}}(x, y)$ is weakly contractible.
	
	A slight modification of Theorem~6.6 of~\cite{e} shows that for any poset $F$ with smallest element~$x$,
	one can define a contraction of augmented directed complexes of $\cC N(F)$ (see paragraph~4.6 of~\loccit).
	By~Proposition~B.18 of~\loccit, this gives a contraction of the \oo-category $\On{F}$
	with centre $x$, \ie an oplax transformation from the constant \oo-functor on $x$
	to the identity \oo-functor on $\On{F}$
	satisfying an additional property (see paragraphs~B.9 of~\loccit).
	One can show that the universal property of the \oo-functor $\On{F} \to L_k \Onm{F}{n}$
	implies that this contraction induces a contraction on $L_k\Onm{F}{n}$ with centre $x$.
	From the results of Appendix~B of~\loccit one finally gets that
	the \oo-category $\Hom_{L_k\Onm{F}{n}}(x, y)$ is weakly contractible for any element $y$ of $F$.

	A detailed proof of this result will appear in the Ph.D.~thesis
	of the author.
\end{remark}

\printbibliography
\end{document}